\numberwithin{equation}{section}
\renewcommand\appendix{\par
    \setcounter{section}{0}
    \gdef\thesection{Appendix~ \Alph{section}}
    \setcounter{equation}{0}
     \renewcommand\theequation{A.~\arabic{equation}}} 
\newcommand{\PP}{\mathbb{P}}
\newcommand{\RR}{\mathbb{R}}
\theoremstyle{definition}
\newtheorem{theorem}{Theorem}[section]
\newtheorem{lemma}[theorem]{Lemma}
\newtheorem{definition}[theorem]{Definition}
\newtheorem{example}{Example}
\newtheorem{assumption}[theorem]{Assumption}
\title{Implicit numerical approximation for stochastic delay differential equations with the nonlinear diffusion term in the infinite horizon}
\author{
 Yudong Wang and Hongjiong Tian \\
 Department of Mathematics, Shanghai Normal University, Shanghai, 200234, China\\
\texttt{  wangyudong\_edu@163.com; hjtian@shnu.edu.cn}
}
\begin{document}
\maketitle

\begin{abstract}
    This paper investigates the approximation of stochastic delay differential equations (SDDEs) via the backward Euler-Maruyama (BEM) method under generalized monotonicity and Khasminskii-type conditions in the infinite horizon. First, by establishing the uniform moment boundedness and finite-time strong convergence of the BEM method, we prove that for sufficiently small step sizes, the numerical approximations strongly converge to the underlying solution in the infinite horizon with a rate of $1/2$, which coincides with the optimal finite-time strong convergence rate.
    Next, we establish the uniform boundedness and convergence in probability for the segment processes associated with the BEM method. This analysis further demonstrates that the probability measures of the numerical segment processes converge to the underlying invariant measure of the SDDEs. Finally, a numerical example and simulations are provided to illustrate the theoretical results.
\end{abstract}

\section{Introduction}
Due to the widespread occurrence of time delays and environmental noises in real-world systems, stochastic delay differential equations (SDDEs) have emerged as a crucial mathematical model in the realms of science and engineering in recent decades \cite{AHMG2007,  CGMP2019, LM2006, M2007, M1974}. However, obtaining explicit solutions for SDDEs is challenging, leading to an increasing focus on numerical approximations in recent years.

For numerical methods, a critical focus is on studying their strong convergence. Many ignificant results have been obtained, including but not limited to: the classic Euler-Maruyama method \cite{Buckwar2000, MS2003}, the backward Euler-Maruyama method \cite{LCF2004,Zhou2015}, the truncated Euler-Maruyama method \cite{GMY2018, SHGL2022}, the tamed Euler method \cite{JY2017}, etc. It should be noted that the above works primarily focus on the strong convergence analysis in finite time. Specifically, for the solution of SDDE, $\{x(t)\}_{t\geq -\tau}$, and its corresponding continuous numerical solution, $\{X(t)\}_{t \geq -\tau}$, the classical study of strong convergence aims to establish upper bounds of the form
\begin{align*}
    \sup \limits_{0 \leq t \leq T}\mathbb{E}|x(t)-X(t)|^p \leq C_T \Delta^q,
\end{align*}
where $T>0, p>0, q>0$ are constants, $\Delta$ is the step size and $C_T$ is a constant dependent on $T$. In general, due to the application of tools such as the Gronwall lemma, the typical form of $C_T$ is an exponential function related to time, such as $C_T=ce^{cT}$ for some positive constant $c$ \cite{LCF2004, SHGL2022}, which implies that $C_T$ is a function that grows over time.

However, this poses some problems. For instance, when dealing with certain issues that are difficult to determine within a fixed time interval, such as the timing of investment opportunities based on favorable market conditions \cite{DGM2021,GA1997}, challenges arise because these opportunities may occur at any moment. Therefore, any assumption that presupposes a specific time interval, including conducting convergence analysis within a finite time, may not be appropriate.

Thus, unlike most existing results, one of the main aims of this paper is devoted to  the study of the strong convergence of backward Euler-Maruyama (BEM) method in infinite horizon, that is, to try to obtain a constant $C$ independent of $T$, such that
\begin{align}\label{UIT}
    \sup \limits_{t \geq 0}\mathbb{E}|x(t)-X(t)|^p \leq C \Delta^q.
\end{align}
If \eqref{UIT} holds, One of the simplest and most straightforward applications is that we no longer need to adjust the step size to meet the specified threshold accuracy when numerically simulating the behavior of the system over a long period of time. It is worth mentioning that progress has been made in this direction by Crisan et al. \cite{ACO2023, CDO2021}, who have developed a method for analyzing the convergence of various numerical methods in the infinite horizon. However, unlike theirs, our focus is on the strong convergence of stochastic differential delay equations in infinite horizons, rather than on weak convergence.

In addition, due to the presence of time delay, the solutions of SDDEs are not Markov processes, and therefore, properties based on Markov processes do not apply. At this point, our focus should shift to the segment process associated with the solution — a continuous function-valued Markov process \cite{M1986}. Therefore, it is sometimes necessary to analyze the convergence of the segment process associated with the numerical method \cite{BSY-2023, LMS2023}. Inspired by these works, the other main aim of this paper is to study the uniform convergence of numerical segment processes in probability and further prove that the probability measure corresponding to the numerical method converges to the underlying invariant measure related to SDDEs in the Bounded Lipschitz metric. Meanwhile, as both an independent result and a prerequisite for the convergence analysis, we also establish the moment boundedness of the numerical solution.

 The main contributions of this paper are summarized as follows:
\begin{itemize}
    \item Inspired by works of Mao (see e.g. \cite{HMS2003,Mao2007, Mao2015}), we analyze the strong convergence of BEM in infinite horizon, extending beyond traditional finite-time frameworks.
    \item We establish the uniform boundedness and convergence in probability of the numerical segment process.
    \item For SDDEs with nonlinear drift and diffusion coefficients, we prove that the probability measure of the numerical segment process converges to the underlying invariant measure.
\end{itemize}

The rest of this paper is organized as follows. Section 2 introduces the necessary notations. Section 3 discusses the necessary properties of the underlying solution. The main results are presented in Section 4, and Section 5 provides a numerical simulation to illustrate the theoretical findings.

\section{Mathematical Preliminaries}
At the beginning of this section, we introduce some necessary notation.
Let $|\cdot|$ denote the Euclidean norm in $\mathbb{R}^d$ and the trace norm in $\mathbb{R}^{d \times m}$, and let $\langle \cdot, \cdot \rangle$ be the inner product in $\mathbb{R}^d$.
For real numbers $a$ and $b$, define $a \vee b = \max\{a, b\}$ and $a \wedge b = \min\{a, b\}$.
Let $\lfloor a \rfloor$ denote the integer part of $a$, and let $\boldsymbol{1}_A(x)$ be the indicator function of the set $A$.
Define $\mathbb{R}^+ = [0, \infty)$.
Let $\mathcal{C} := C([-\tau, 0]; \mathbb{R}^d)$ denote the family of continuous functions $X: [-\tau, 0] \to \mathbb{R}^d$ equipped with the supremum norm $\|X\| = \sup_{-\tau \leq \theta \leq 0} |X(\theta)|$.
For $R>0$, define $B(R):=\{X\in \mathcal{C}:\|X\|\leq R\}$.
For $p \geq 2$, define $\mathcal{C}_{\mathcal{F}_0}^p := \mathcal{C}_{\mathcal{F}_0}^p([-\tau, 0]; \mathbb{R}^d)$ as the family of all $\mathcal{F}_0$-measurable bounded $C$-valued random variables $\xi = \{\xi(\theta): -\tau \leq \theta \leq 0\}$.
Finally, let $\hat{\mathcal{C}}(\mathbb{R}^d \times \mathbb{R}^d; \mathbb{R}^+)$ denote the family of continuous functions $V: \mathbb{R}^d \times \mathbb{R}^d \to \mathbb{R}^+$ satisfying $V(x, x) = 0$ for all $x \in \mathbb{R}^d$.

This paper focuses on the scalar SDDE of the form
\begin{align}\label{SDDE}
dx(t)=f(x(t),x(t-\tau))dt +g(x(t),x(t-\tau))dW(t),~~~t> 0,
\end{align}
with the initial data $\xi=\{\xi(\theta)$: $\theta\in [-\tau, 0]\} \in \mathcal{C}$,
where  $\tau>0$,  $f:  \RR^d \times \RR^d \rightarrow \RR^{d}$ and $g: \RR^d\times \RR^d  \rightarrow \RR^{d\times m}$ are Borel measurable, $W(t)$ is an m-dimensional Brownian motion on a probability space  $( \Omega,~\mathcal{F}, ~\PP )$ with a right-continuous complete filtration $\{{\mathcal{F}}_{t}\}_{t\geq 0}$, and $\xi$ is an $\mathcal F_0$-measurable, continuous function-valued random variable from
$[-\tau,~0]$ to $\mathbb{R}^{d}$.
Let $\{x_{t}\}_{t\geq 0} $ be the  segment process, where $x_{t}(\theta):=x(t+\theta)$ for $\theta\in[-\tau,0]$, and sometimes to emphasize the initial value $\xi$ at $t=0$, we also write the solution of \eqref{SDDE} as $x(t; 0, \xi)$  while the corresponding $x_t(\theta)$ is written as $x_t^{0, \xi}(\theta)$. It must be mentioned that the process $\{x_t\}_{t \geq 0}$ associated with the underlying solution of $\eqref{SDDE}$ is a homogeneous Markov process; see, for example, Theorem 1.1 in \cite{M1986} or Proposition 3.4 in \cite{RRV2006} for more details.

For simplicity, given $T, T_1, T_2, \tau \in \mathbb{R}_{+}$, we may assume without loss of generality that $\Delta=\frac{T}{N}=\frac{T_1}{N_1}=\frac{T_2}{N_2}=\frac{\tau}{M}$ for some integers $N$, $N_1$, $N_2$ and $M$, and let $t_k = k\Delta$ for $k \geq -M$. Throughout the paper, $C$ denotes a generic positive constant that may vary in different contexts. Then, we make the following assumptions for the analysis.

\begin{assumption}\label{as1}
 There exist constants $a_1 $, $a_2$, $a_3, a_4, a_5, a_6, a_7, a_8, a_9$, $q$ with $q\geq 2$ and a function $V(\cdot, \cdot)\in \hat{\mathcal{C}}(\mathbb{R}^d \times \mathbb{R}^d; \mathbb{R}^+)$ such that
\begin{align*}
  | f(x, y)-f(\bar{x},\bar  y)| \leq a_1(1+|x|^{q-1}&+|\bar{x}|^{q-1}+|y|^{q-1}+|\bar{y}|^{q-1}) (| x-\bar {x}|+| y-\bar {y}|) \\
  |g(x,y)-g(\overline{x},\overline{y})|^2 &\leq \epsilon_1|g_1(x)-g_1(\overline{x})|^2+\epsilon_2|g_2(y)-g_2(\overline{y})|^2\\
  \epsilon_1|g_1(x)-g_1(\overline{x})|^2&\leq a_2 |x-\overline{x}|^2+a_4V(x,\overline{x})\\
  \epsilon_2|g_2(y)-g_2(\overline{y})|^2&\leq a_3|y-\overline{y}|^2 +a_5V(y,\overline{y})\\
  |f(x, y)|&\leq a_6 (1+|x|^q+|y^q|) , \\
  |g(x,y)|^2&\leq a_7+a_8|x|^{q+1}+a_9|y|^{q+1})
 \end{align*}
 for any  $x,~\bar{x},~y,~\bar y\in \RR ^d $ , where $V(x,y)=|x-y|^2(|x|^{q-1}+|y|^{q-1})$
\end{assumption}
\begin{assumption}\label{as2}
   There exist nonnegative constants  $b_1-b_{13},~l_{1}, ~l_2$ and $\nu$ with $b_1>b_2+l_1(a_2+a_3),~b_3>l_1(a_4+a_5),~b_5>b_6,~b_7>b_8, ~b_{10}>b_{11},~b_{12}>b_{13}, ~l_1> 2, ~l_2>1$ and $p^* =4q-2$,  such that
\begin{align*}
2\big\langle x-\bar {x} ,f(x,&y)-f(\bar {x},\bar y)\big\rangle
\leq -b_1| x-\bar {x}|^2+b_2| y-\bar {y}| ^2 - b_3V(x,\overline{x})\\
\frac{p^*}{2}\big|x\big|^{p^*-2}\big(2\big \langle x, f(x,y) \big \rangle & + (p^*-1)|g(x,y)|^2 \big)\leq b_4-b_5|x|^{p^*}+b_6|y|^{p^*} - b_7|x|^{p^*+q-1}+b_8|y|^{p^*+q-1} \\
2\big \langle x, f(x,y)& \big \rangle  + l_2|g(x,y)|^2 \leq b_{9}-b_{10}|x|^{2}+b_{11}|y|^2-b_{12}|x|^{q+1}+b_{13}|y|^{q+1}
\end{align*}
for any $x,~\bar x,~y,~\bar y\in\RR^d$
\end{assumption}

\begin{assumption} \label{as3}
     The initial value is Holder continuous, i.e., there exist a positive constant $K_1$ such that for any $-\tau \leq s < t \leq 0$
     \begin{align}
         |\xi(t)-\xi(s)|^2 \leq K_1(t-s).
     \end{align}
\end{assumption}

It should be pointed out that under Assumptions \ref{as1} and \ref{as2}, SDDE \eqref{SDDE} with the initial data $\xi\in\mathcal {C}_{\mathcal{ F}_0}^{p^*+q-1}$ has a unique global solution $x(t)$ for $t\geq -\tau$ with $\mathbb{E}(|x(t)|^{p^*})\leq \infty$ (see \cite[Theorem 2.4]{MR2005} or \cite[p.278, Theorem 7.12]{MY2006} for details).

Now, we define the backward Euler-Maruyama approximate solution for Eq. \ref{SDDE} as follows.
\begin{align}\label{DSDDE}
    \begin{split}
  \left \{
 \begin{array}{ll}
X_k=\xi(t_k), \quad\quad\quad\quad k=-M,-M+1,...,0,\\
 X_k=X_{k-1}+f(X_k, X_{k-M})\Delta+ g(X_{k-1}, X_{k-M-1})\Delta W_{k-1}, \quad \quad k=1,2,...,
 \end{array}
 \right.
 \end{split}
 \end{align}
 where $\Delta W_{k-1} := W(k\Delta)-W((k-1)\Delta)$. Clearly, the BEM method \eqref{DSDDE} is well defined (see e.g. \cite{SWMF2025}).
Then, the continuous backward Euler-Maruyama approximate solution is defined by
\begin{align}\label{CDSDDE}
    \begin{split}
  \left \{
 \begin{array}{ll}
X(t)=\xi(t), \quad\quad\quad\quad\quad\quad\quad\quad\quad\quad\quad 0 \geq t\geq-\tau ,\\
X(t)=X_k+\theta(X_{k+1}-X_k), \quad\quad\quad\quad\quad~ t\geq 0
 \end{array}
 \right.
 \end{split}
 \end{align}
 where $k=\lfloor t/\Delta\rfloor$ and $\theta=t/\Delta-k$. Similar to the underlying solution of Eq.\eqref{SDDE}, denote $X_{t}:=\{X(t+\theta) : -\tau \leq \theta \leq 0\}$, and sometimes to emphasize the initial value $\xi$ at $t=0$, we also denote them by $X_{k}^{0, \xi}$ and $X_{t}^{0, \xi}$.

Next, let $\mathcal{F}_t:= \sigma (W(u), 0 \leq u \leq t) \wedge \mathcal{N}$ for $t > 0$, where $\mathcal{N}$ is the set of all $\mathbb{P}$-null sets.
In a way similar to the proof of \cite[Theorem 6.1]{SWMF2025} , we obtain the following result.
 \begin{lemma}\label{L--5}
 Under Assumption \ref{as2}, $\{X_{t_k}\}_{k \geq 0}$ is a homogeneous Markov chain, i.e., for any $B \in \mathfrak{B}(C)$ and $\xi \in \mathcal{C}$
  \begin{align}\label{TH}
      \mathbb{P}\big(~X_{t_{k+1}} \in B |~X_{t_k} = \xi  \big)= \mathbb{P}\big(~X_{t_1} \in B |~X_{0}= \xi \big),
  \end{align}
  and
  \begin{align}\label{MP}
       \mathbb{P}\big(~X_{t_{k+1}} \in B |~\mathcal{F}_{t_k} \big)= \mathbb{P}\big(~X_{t_{k+1}} \in B |~X_{t_k} \big).
  \end{align}
 \end{lemma}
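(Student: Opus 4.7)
The plan is to exhibit a single measurable map $\Phi:\mathcal{C}\times\mathbb{R}^m\to\mathcal{C}$ (independent of $k$) such that $X_{t_{k+1}}=\Phi(X_{t_k},\Delta W_k)$ almost surely, and then invoke the classical fact that if $Y_{k+1}$ is a deterministic function of $Y_k$ and of an $\mathcal{F}_{t_k}$-independent input with the same distribution across $k$, then $\{Y_k\}$ is a time-homogeneous Markov chain.

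First I would read off from the segment $X_{t_k}$ the three values $X_{k-M}=X_{t_k}(-\tau)$, $X_{k+1-M}=X_{t_k}(-\tau+\Delta)$, and $X_k=X_{t_k}(0)$; all three are continuous (hence Borel-measurable) evaluation functionals on $\mathcal{C}$. The one-step BEM recursion written at level $k{+}1$ is
\begin{equation*}
X_{k+1}-\Delta f(X_{k+1},X_{k+1-M})=X_k+g(X_k,X_{k-M})\Delta W_k.
\end{equation*}
Under Assumption \ref{as2}, the monotonicity estimate $2\langle x-\bar x,f(x,y)-f(\bar x,y)\rangle\le -b_1|x-\bar x|^2$ ensures, for sufficiently small $\Delta$, that the map $x\mapsto x-\Delta f(x,y)$ is a strictly monotone homeomorphism of $\mathbb{R}^d$ (as in \cite{SWMF2025}); thus it admits a measurable inverse $F_\Delta(\,\cdot\,,y)$, and we may set
\begin{equation*}
X_{k+1}=F_\Delta\!\bigl(X_k+g(X_k,X_{k-M})\Delta W_k,\;X_{k+1-M}\bigr).
\end{equation*}
Combining this with linear interpolation on each of the subintervals $[t_{k+1-M+j},t_{k+2-M+j}]$ (all but the last of which are already determined by $X_{t_k}$), I obtain the required deterministic map $\Phi$ that sends $(X_{t_k},\Delta W_k)\mapsto X_{t_{k+1}}$, and $\Phi$ is Borel measurable because it is built from continuous evaluations, the measurable implicit solution $F_\Delta$, the continuous coefficient $g$, and the continuous interpolation operator.

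Now the increment $\Delta W_k$ is independent of $\mathcal{F}_{t_k}$, and $X_{t_k}$ is $\mathcal{F}_{t_k}$-measurable, so for any $B\in\mathfrak{B}(\mathcal{C})$,
\begin{equation*}
\mathbb{P}\bigl(X_{t_{k+1}}\in B\,\big|\,\mathcal{F}_{t_k}\bigr)=\mathbb{P}\bigl(\Phi(X_{t_k},\Delta W_k)\in B\,\big|\,\mathcal{F}_{t_k}\bigr)=h(X_{t_k}),
\end{equation*}
where $h(\xi):=\mathbb{P}(\Phi(\xi,\Delta W_k)\in B)$. Because the $\Delta W_k$ are i.i.d.\ Gaussian, $h$ does not depend on $k$, and the right-hand side is $X_{t_k}$-measurable; this simultaneously yields \eqref{MP} and, by taking conditional expectations given $\{X_{t_k}=\xi\}$ and comparing with the $k=0$ case, yields \eqref{TH}.

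The only real obstacle is justifying existence, uniqueness, and measurability of the implicit solution $F_\Delta$; this is exactly the content imported from \cite[Theorem~6.1]{SWMF2025}, where the monotonicity constants $b_1$ in Assumption \ref{as2} force the implicit equation to admit a unique pathwise solution that depends measurably on its data. Once $F_\Delta$ is in hand, the proof reduces to the bookkeeping of assembling $\Phi$ and invoking the standard measure-theoretic argument sketched above.
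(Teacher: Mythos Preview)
The paper does not supply its own argument for this lemma; it simply states that the result follows ``in a way similar to the proof of \cite[Theorem~6.1]{SWMF2025}''. Your proposal is precisely the standard construction that underlies that reference: write $X_{t_{k+1}}=\Phi(X_{t_k},\Delta W_k)$ for a fixed measurable $\Phi$ built from grid-point evaluations, the implicit solver $F_\Delta$ guaranteed by the one-sided Lipschitz condition in Assumption~\ref{as2}, and linear interpolation, then invoke independence and identical distribution of the $\Delta W_k$. So your approach coincides with what the paper intends, only with the details spelled out rather than cited.

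One small cosmetic point: the lemma is stated under Assumption~\ref{as2} alone, and in the problem setup $g$ is only assumed Borel measurable, not continuous; your measurability argument for $\Phi$ goes through verbatim if you replace ``the continuous coefficient $g$'' by ``the Borel-measurable coefficient $g$''. Also, the monotonicity bound actually gives strict monotonicity of $x\mapsto x-\Delta f(x,y)$ for every $\Delta>0$, so the qualifier ``for sufficiently small $\Delta$'' is not needed for uniqueness (coercivity/surjectivity is what the cited reference supplies).
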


Moreover, to explore the invariant measure of the solution, some necessary concepts also need to be introduced. Let $\mathfrak{B}(C)$ be the Borel algebra of $C$ and $\mathcal{P}(C):=\mathcal{P}(C, \mathfrak{B}(C))$ be the family of  probability measures on $(C,\mathfrak{B}(C))$. For any $\xi \in C$ and $B \in \mathfrak{B}(C)$, the transition probabilities for the segment processes $x_t$ and $X_{t_k}$ with initial value $x_0=X_0 =\xi$ is defined as
\begin{align*}
    \PP_t(\xi, B) :=\PP(x_t \in B | x_0 \in \xi)  \quad\quad \overline{\PP}_{t_k}(\xi, B) :=\PP(X_{t_k} \in B | X_0 \in \xi)
\end{align*}
Define metric $d_{\mathbb{L}}$ on $\mathcal{P}(C)$ by
\begin{equation*}
    \mathit{d}_{\mathbb{L}}(\mathbb{P}_1 , \mathbb{P}_2) = \sup \limits_{\mathit{F} \in \mathbb{L}} \left| \int_{C} \mathit{F}(x) \mathbb{P}_1(dx) - \int_{C} \mathit{F}(x)\mathbb{P}_2(dx) \right|,
\end{equation*}
where $\mathbb{L}$ is the test functional space
\begin{align*}
    \mathbb{L} := \Big\{F: C \to \mathbb{R} \Big| |F(x)-F(y)| \leq |x-y| \quad\text{and}\quad \sup \limits_{x\in C} |F(x)| \leq 1 \Big\}
\end{align*}
The weak convergence of probability measures can be illustrated in terms of metric $\mathit{d}_{\mathbb{L}}$ \cite{IW1989}. That is, a sequence of probability measures $\{\mathbb{P}_t\}_{t \geq 0}$ in $\mathcal{P}(C)$ converge weakly to a probability measure $\mathbb{P} \in \mathcal{P}(C)$ if and only if
\begin{equation*}
    \lim \limits_{ k \to \infty} \mathit{d}_{\mathbb{L}}(\mathbb{P}_k, \mathbb{P}) = 0.
\end{equation*}
Then we define the invariant measure by using the concept of weak convergence.

\begin{definition}
 For any initial value $\xi \in \mathcal{C}$, the $C$-valued process $\{x_t^{0, \xi}\}_{t \geq 0}$ is said to have a invariant measure $\pi \in \mathcal{P}(C)$ if the transition probability measure ${\mathbb{P}}_t(\xi,\cdot)$ converges weakly to $\pi( \cdot)$ as $t \to \infty$ for every $\xi \in C$, that is
    \begin{equation*}
        \lim \limits_{t \to \infty} \left(\sup \limits_{\mathit{F} \in \mathbb{L}} \left|\mathbb{E}(\mathit{F}(x_t^{0, \xi}))-\mathbb{E}_{\pi}(\mathit{F})\right|\right) = 0,
    \end{equation*}
    where
    \begin{equation*}
        \mathbb{E}_{\pi}(\mathit{F}) = \int_{C} \mathit{F}(y)\pi(dy).
    \end{equation*}
\end{definition}
We end up this section by pointing out some crucial inequalities for the analysis in our paper.
For any $a_i \in \mathbb{R}^d, n_i >0, k\in \mathbb{N},$ and $ p\geq 1, $ the inequalities
\begin{align}\label{EI-1}
   | \sum \limits_{i=1}^k a_i |^p \leq k^{p-1}\sum \limits_{i=1}^k |a_i|^p
 \end{align}
 and
 \begin{align}\label{EI-2}
     |a_1|^{n_2} \leq |a_1|^{n_1} +|a_1|^{n_3}
 \end{align}
 hold, where $n_1 \leq n_2 \leq n_3$

\section{Some properties of the underlying solution}
 In this section, we mainly introduce some properties of the solution to \eqref{SDDE} for later analysis.
   Following similar arguments as in Lemma 3.1, Lemma 3.2 and Theorem 3.6 in \cite{WWM2019}, we obtain the following results. The first property we will show is boundedness in probability of the underlying solution \eqref{SDDE}.
\begin{lemma}\label{LM3.1}
    Under Assumptions \ref{as1} and \ref{as2}, for any $\xi \in B(R)$, $ t\geq 0$, $T >0$ and $ \epsilon_3 >0$, there exists a positive constant $\hat{H}$ such that
    \begin{align*}
        \mathbb{P}\{|x(s;0,\xi)| \leq \hat{H}, \forall s \in [t, t+T]\} \geq 1-\epsilon_3,
    \end{align*}
    for all $t \geq -\tau$.
\end{lemma}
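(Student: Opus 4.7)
My plan is to reduce the desired probability estimate to a uniform-in-time moment estimate via a stopping-time argument, the optional stopping theorem, and Markov's inequality. The two ingredients I will establish are (a) a uniform pointwise moment bound $\sup_{t \geq -\tau}\mathbb{E}|x(t;0,\xi)|^2 \leq C_0(R)$, and (b) a uniform bound on the $T$-length shifted running integral $\sup_{t \geq 0}\int_t^{t+T}\mathbb{E}|x(u;0,\xi)|^{q+1}\,du \leq C_1(R,T)$.

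\textbf{Moment bounds.} Apply It\^o's formula to $|x(t)|^2$. Since $l_2 > 1$, the It\^o drift satisfies $2\langle x, f(x,y)\rangle + |g(x,y)|^2 \leq 2\langle x, f(x,y)\rangle + l_2|g(x,y)|^2$, so by the third inequality of Assumption~\ref{as2},
\begin{equation*}
2\langle x, f(x,y)\rangle + |g(x,y)|^2 \leq b_9 - b_{10}|x|^2 + b_{11}|y|^2 - b_{12}|x|^{q+1} + b_{13}|y|^{q+1}.
\end{equation*}
Taking expectations (after localizing the local martingale part by $\sigma_N := \inf\{t \geq 0 : |x(t)| \geq N\}$ and letting $N\to\infty$) and using the standard delay identity $\int_0^t \mathbb{E}|x(s-\tau)|^p\,ds \leq \int_{-\tau}^0 |\xi(s)|^p\,ds + \int_0^t \mathbb{E}|x(s)|^p\,ds$, the strict dominance $b_{10}>b_{11}$ lets me iterate a Halanay-type delay recursion over intervals of length $\tau$ to yield (a). For (b), rearrange the same inequality integrated over $[t,t+T]$: with $G(t) := \int_t^{t+T}\mathbb{E}|x(u)|^{q+1}\,du$, the calculation reduces to $b_{12}\,G(t) \leq b_{13}\,G(t-\tau) + \mathrm{const}(R,T)$, and since $b_{12}>b_{13}$, iterating back to the bounded initial segment produces the uniform bound (b).

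\textbf{Stopping time and Markov's inequality.} Fix $t\ge 0$ and $K>0$; the case $t\in[-\tau,0]$ reduces to this one by splitting $[t,t+T]$ at $0$ and using $\|\xi\|\leq R$ on the initial-segment part. Set $\tau_K := \inf\{s\ge t : |x(s;0,\xi)| \geq K\}$. By path continuity, $\{\sup_{s\in[t,t+T]}|x(s)|\ge K\} = \{\tau_K \le t+T\}$, and on this event $|x(\tau_K)|=K$. It\^o's formula together with optional stopping (using the localization $\sigma_N$) gives
\begin{equation*}
\mathbb{E}|x(\tau_K\wedge(t+T))|^2 \leq \mathbb{E}|x(t)|^2 + \mathbb{E}\int_t^{\tau_K\wedge(t+T)}\big(b_9 + b_{11}|x(u-\tau)|^2 + b_{13}|x(u-\tau)|^{q+1}\big)\,du,
\end{equation*}
after discarding the nonpositive dissipative terms $-b_{10}|x|^2$ and $-b_{12}|x|^{q+1}$. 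Applying (a) and (b) to the right-hand side bounds it by a constant $C_2(R,T)$ independent of $t$ and $K$. Markov's inequality then yields
\begin{equation*}
K^2\,\mathbb{P}\Big(\sup_{s\in[t,t+T]}|x(s;0,\xi)| \ge K\Big) \leq \mathbb{E}|x(\tau_K\wedge(t+T))|^2 \leq C_2(R,T),
\end{equation*}
and the choice $\hat{H} := \sqrt{C_2(R,T)/\epsilon_3}$ concludes.

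\textbf{Main obstacle.} The delicate step is (b). The pointwise bound (a) follows from a classical delay-stability argument, but obtaining a uniform-in-$t$ bound on the $T$-length shifted integral of the \emph{higher} moment $\mathbb{E}|x(u)|^{q+1}$ is more subtle, since a naive integration of the Lyapunov inequality only yields linear growth $\int_0^t \mathbb{E}|x(s)|^{q+1}\,ds \leq C(1+t)$. The resolution is to set up the contractive recursion $b_{12}\,G(t) \leq b_{13}\,G(t-\tau) + \mathrm{const}$ and exploit the ratio $b_{13}/b_{12}<1$ to iterate back to the initial segment. A secondary technicality is the routine localization by exit times $\sigma_N$ required before taking expectations and applying optional stopping in each It\^o computation.
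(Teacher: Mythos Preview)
Your proposal is correct and follows the standard Lyapunov--stopping-time--Chebyshev route. The paper does not actually supply a proof of this lemma: it only states that the result follows ``following similar arguments as in Lemma~3.1, Lemma~3.2 and Theorem~3.6 in \cite{WWM2019}'', so there is no detailed argument to compare against. That said, your strategy is exactly the one the paper itself uses for the discrete analogue (Lemma~\ref{LM510}): there the authors introduce the exit time $\overline{\lambda}_k^{\Lambda}$, feed in the uniform moment bound of Lemma~\ref{LM57} (the discrete counterpart of your ingredients (a) and (b)), and finish with Chebyshev. Your continuous-time version is the natural mirror of that, so the approaches coincide.
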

The next result will be used in the analysis of Lemma \ref{LM47}.
\begin{lemma}\label{LM3.2}
    For any  $\xi \in B(M)$ and $\epsilon_4, \epsilon_5 > 0$, there exists a positive constant $\delta$ such that
    \begin{align*}
        \mathbb{P}\{\sup \limits_{\substack{t_2-t_1 \leq \delta\\
t-\tau \leq t_1 \leq t_2 \leq t} } |x(t_2; 0, \xi)-x(t_1;0,\xi)| \geq \epsilon_4  \} \leq \epsilon_5,
    \end{align*}
    for any $t \geq 0$.
\end{lemma}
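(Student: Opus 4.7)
The plan is to split the probability into two contributions: a ``bad'' event where the trajectory of $x$ leaves a large ball (handled by Lemma \ref{LM3.1}) and a ``good'' event on which the coefficients are effectively bounded, so that a standard modulus-of-continuity estimate for the SDE applies with constants independent of $t$.

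First, I would apply Lemma \ref{LM3.1} on a window that comfortably contains $[t-\tau, t]$ together with its $\tau$-shift. Concretely, set $\tilde t := \max\{-\tau,\, t-2\tau\}$ and use Lemma \ref{LM3.1} with window length $T=3\tau$ and tolerance $\epsilon_5/2$, which yields an $\hat H=\hat H(M,\tau,\epsilon_5)$, \emph{independent of $t$}, such that the event
\[
\Omega_1 := \{|x(s;0,\xi)| \le \hat H \text{ for all } s \in [\tilde t, \tilde t + 3\tau]\}
\]
has $\mathbb{P}(\Omega_1)\ge 1-\epsilon_5/2$. Combined with $\|\xi\|\le M$ on $[-\tau,0]$, this guarantees $|x(s)|\le \hat H\vee M=:H$ on $[t-2\tau,\,t]$ on $\Omega_1$, so in particular both $x(s)$ and $x(s-\tau)$ are bounded by $H$ for $s\in[t-\tau,t]$.

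Next I would introduce the stopping time $\rho := \inf\{s\ge 0 : |x(s)|>H\}$, so that $\Omega_1 \subseteq \{\rho \ge t\}$, and work with the stopped process $y(s):=x(s\wedge \rho)$. Because the drift and diffusion satisfy the polynomial growth bounds in Assumption \ref{as1}, on $\{|x|\le H\}$ we have $|f|,|g|^2$ bounded by constants depending only on $H$. Using the integral form of the SDE together with H\"older's inequality on the Lebesgue part and the Burkholder--Davis--Gundy inequality on the stochastic part, I would derive, for any $t-\tau\le t_1<t_2\le t$ and any even integer $p$,
\[
\mathbb{E}|y(t_2)-y(t_1)|^{2p} \le C_{H,p}\,(t_2-t_1)^{p},
\]
with a constant independent of $t$, $t_1$, $t_2$. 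Choosing $p$ large enough ($p\ge 2$ is sufficient), the Kolmogorov--Chentsov continuity criterion (applied on the compact interval $[t-\tau,t]$, whose length $\tau$ is independent of $t$) yields a constant $C'$ such that
\[
\mathbb{E}\Big[\sup_{\substack{t-\tau\le t_1\le t_2\le t \\ t_2-t_1\le \delta}} |y(t_2)-y(t_1)|^{2p}\Big] \le C'\,\delta^{p-1}.
\]

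Finally, a Markov inequality converts this into a probability bound: for $\delta$ sufficiently small (depending on $\epsilon_4,\epsilon_5, H, p$), the right-hand side divided by $\epsilon_4^{2p}$ is at most $\epsilon_5/2$. Since $y\equiv x$ on $\Omega_1\cap[t-\tau,t]$, the full event in the lemma has probability
\[
\mathbb{P}\Big\{\sup |x(t_2)-x(t_1)|\ge \epsilon_4\Big\} \le \mathbb{P}(\Omega_1^c)+\mathbb{P}\Big(\Omega_1\cap\{\sup|y(t_2)-y(t_1)|\ge\epsilon_4\}\Big) \le \tfrac{\epsilon_5}{2}+\tfrac{\epsilon_5}{2}=\epsilon_5,
\]
as required. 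The main obstacle is ensuring that \emph{all} constants in Steps 2 and 3 can be chosen independently of $t$; this is delicate because the SDE is not globally Lipschitz, but the bounded-trajectory reduction via Lemma \ref{LM3.1}, together with the fact that the window length $\tau$ is fixed, is exactly what makes the estimates uniform in $t$.
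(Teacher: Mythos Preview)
The paper does not actually prove Lemma~\ref{LM3.2}; it is stated together with Lemmas~\ref{LM3.1} and~\ref{LM3.3} under the blanket remark ``Following similar arguments as in Lemma 3.1, Lemma 3.2 and Theorem 3.6 in \cite{WWM2019}''. So there is no in-paper argument to compare against directly. Your proposal is a correct and standard way to obtain the result: localise via Lemma~\ref{LM3.1} to make the coefficients bounded, get a uniform-in-$t$ increment moment bound for the stopped process, and convert this into a modulus-of-continuity estimate via Kolmogorov--Chentsov and Markov's inequality.

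Two remarks on details. First, the exponent you claim from Kolmogorov--Chentsov is slightly too optimistic: from $\mathbb{E}|y(t_2)-y(t_1)|^{2p}\le C_{H,p}(t_2-t_1)^{p}$ one obtains $\mathbb{E}\bigl[\sup_{|t_2-t_1|\le\delta}|y(t_2)-y(t_1)|^{2p}\bigr]\le C'\delta^{\beta}$ for any $\beta<p-1$, not $\beta=p-1$; this is harmless since any positive power suffices. Second, for $t<\tau$ the window $[t-\tau,t]$ overlaps the initial segment where $x=\xi$, and your stopped-SDE estimate does not apply there; you need one sentence noting that $\xi$ is a fixed continuous (hence uniformly continuous) function on $[-\tau,0]$, so its contribution to the modulus is handled deterministically by shrinking $\delta$.

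For context, the paper's own numerical analogue (Lemma~\ref{LM511}) uses the more elementary partition approach rather than Kolmogorov--Chentsov: divide $[t_k,t_k+\tau]$ into $j^*$ equal pieces, bound the fourth moment of the increment over each piece directly by BDG, then use a union bound over the pieces and Chebyshev. That argument avoids invoking the continuity criterion and yields explicit constants, at the cost of being somewhat longer; your route is cleaner once one is willing to quote Kolmogorov--Chentsov. The proof in \cite{WWM2019} that the paper cites almost certainly follows the partition approach as well.
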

The final result will be used in Theorem \ref{TM48}.
\begin{lemma}\label{LM3.3}
    Under Assumption \ref{as1}-\ref{as3}. The segment process $\{x_t\}_{t \geq 0}$ is asymptotically stable in distribution and admits a unique invariant measure $\pi$.
\end{lemma}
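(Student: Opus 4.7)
The plan is to prove Lemma \ref{LM3.3} by combining three ingredients: (i) an attraction-in-mean-square property that forces two solutions with different initial data to coalesce in distribution, (ii) tightness of the time-marginals of the segment process derived from Lemmas \ref{LM3.1} and \ref{LM3.2}, and (iii) a Cauchy-sequence argument in $(\mathcal{P}(\mathcal{C}), d_{\mathbb{L}})$ together with the Markov property to secure existence, uniqueness, and invariance of $\pi$.

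First I would establish, for any $\xi, \eta \in \mathcal{C}$, the attraction
\[
\lim_{t \to \infty} \mathbb{E}\bigl\| x_t^{0,\xi} - x_t^{0,\eta} \bigr\|^2 = 0.
\]
The engine is It\^o's formula applied to $|x(t;0,\xi)-x(t;0,\eta)|^2$. Using the first inequality of Assumption \ref{as2}, namely $2\langle x-\bar x, f(x,y)-f(\bar x,\bar y)\rangle \leq -b_1|x-\bar x|^2+b_2|y-\bar y|^2-b_3 V(x,\bar x)$, together with the three diffusion estimates from Assumption \ref{as1} (in which the $V$-terms are absorbed by $b_3$, and the Lipschitz-type terms contribute at most $(a_2+a_3)$ after choosing a weight), the strict gap $b_1 > b_2 + l_1(a_2+a_3)$ with $l_1>2$ yields a Halanay-type differential inequality
\[
\frac{d}{dt}\mathbb{E}|x(t;\xi)-x(t;\eta)|^2 \leq -\lambda\, \mathbb{E}|x(t;\xi)-x(t;\eta)|^2 + \mu\, \mathbb{E}|x(t-\tau;\xi)-x(t-\tau;\eta)|^2,
\]
with $\lambda>\mu\geq 0$, which gives exponential decay of the pointwise second moment. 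Pointwise decay is upgraded to decay of the segment norm $\|\cdot\|$ by a standard Burkholder--Davis--Gundy estimate on each window $[t-\tau,t]$, where Lemma \ref{LM3.2} controls the tail oscillation and Assumption \ref{as3} handles the very first window containing $\xi$.

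Next I would prove tightness of the family $\{\mathbb{P}_t(\xi,\cdot)\}_{t\geq 0}$ in $\mathcal{P}(\mathcal{C})$. Given $\epsilon>0$, Lemma \ref{LM3.1} furnishes an $R>0$ with $\mathbb{P}(\|x_t^{0,\xi}\|\leq R)\geq 1-\epsilon/2$ uniformly in $t$, while Lemma \ref{LM3.2} supplies, for each $k\in\mathbb{N}$, a mesh size $\delta_k>0$ with
\[
\mathbb{P}\Bigl(\sup_{\substack{t-\tau\leq s_1<s_2\leq t \\ s_2-s_1\leq \delta_k}} |x(s_2)-x(s_1)| \geq 1/k\Bigr) \leq \epsilon 2^{-k-1}.
\]
The set
\[
K := \bigl\{ \phi\in\mathcal{C} : \|\phi\|\leq R,\ \ \sup_{|u-v|\leq \delta_k}|\phi(u)-\phi(v)|\leq 1/k\ \text{for all}\ k\bigr\}
\]
is relatively compact in $\mathcal{C}$ by Arzel\`a--Ascoli, and the union bound gives $\mathbb{P}_t(\xi,K)\geq 1-\epsilon$ for every $t\geq 0$, proving tightness.

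Third, I would combine (i) and (ii) to finish. For fixed $\xi\in\mathcal{C}$, the Markov property \eqref{TH}--\eqref{MP} applied to the underlying solution gives
\[
\bigl| \mathbb{E} F(x_{t+s}^{0,\xi}) - \mathbb{E} F(x_t^{0,\xi}) \bigr|
= \bigl| \mathbb{E}\bigl( \mathbb{E}[F(x_t^{0,\eta})]|_{\eta=x_s^{0,\xi}} - F(x_t^{0,\xi}) \bigr) \bigr|
\leq \mathbb{E}\bigl\| x_t^{0,x_s^{0,\xi}} - x_t^{0,\xi} \bigr\|,
\]
for every $F\in\mathbb{L}$. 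Truncating $x_s^{0,\xi}$ on the tightness ball $\{\|\cdot\|\leq R\}$ and invoking the attraction from step (i) together with step (ii) shows that $\{\mathbb{P}_t(\xi,\cdot)\}_{t\geq 0}$ is Cauchy in $(\mathcal{P}(\mathcal{C}), d_{\mathbb{L}})$, whose completeness yields a limit $\pi_\xi$. Applying (i) one more time gives $d_{\mathbb{L}}(\mathbb{P}_t(\xi,\cdot),\mathbb{P}_t(\eta,\cdot))\to 0$, so $\pi_\xi=\pi_\eta=:\pi$. Invariance of $\pi$ follows by letting $t\to\infty$ in the Chapman--Kolmogorov identity $\mathbb{P}_{t+s}(\xi,\cdot)=\int \mathbb{P}_t(\zeta,\cdot)\,\mathbb{P}_s(\xi,d\zeta)$, and uniqueness is immediate from the universal attraction of $\pi_\xi$.

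The main obstacle will be step (i): to extract the Halanay decay one must carefully weight $V(x,\bar x)$ against the diffusion contributions so that the $V$-term is both produced with sign $-b_3$ by the drift pairing and absorbed on the diffusion side by $b_3>l_1(a_4+a_5)$. Balancing these with the quadratic part through the single free parameter $l_1>2$ is delicate, and the upgrade from pointwise second-moment decay to segment-norm decay on an infinite horizon is where Lemmas \ref{LM3.1}--\ref{LM3.2} and Assumption \ref{as3} are actually used.
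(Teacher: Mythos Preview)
The paper gives no self-contained proof of Lemma~\ref{LM3.3}; it simply defers to Theorem~3.6 of \cite{WWM2019}, and your three-step outline (mean-square attraction of two solutions from the first inequality of Assumption~\ref{as2}, Arzel\`a--Ascoli tightness via Lemmas~\ref{LM3.1}--\ref{LM3.2}, then a Cauchy argument in $(\mathcal{P}(\mathcal{C}),d_{\mathbb{L}})$ using the Markov property) is exactly the standard route that reference follows. One cosmetic slip: the labels \eqref{TH}--\eqref{MP} you invoke pertain to the \emph{numerical} segment chain, whereas the Markov property of the true segment process $\{x_t\}$ is asserted separately in Section~2.
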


  \section{Main results}
  In this section, we will establish a series of properties of the numerical solution produced by the BEM method. We begin by showing that the moments of the numerical solution are bounded.
  \begin{lemma}\label{LM57}
    Under Assumption \ref{as2}, for any $\xi\in B(R)$, there exists a $\Delta_1>0$ such that for any $\Delta \in(0,\Delta_1]$, the BEM method \eqref{DSDDE} has the following property
      \begin{align*}
       \sup \limits_{i\geq -M} \mathbb{E}\big(|X_i|^2+\frac{l_2\Delta}{1+\lambda_1 \Delta}|g(X_i,X_{i-M})|^2\big)+ \sup\limits_{k\geq0}\Delta \sum\limits_{j=k-M+1}^{k} \mathbb{E}(|X_j|^{q+1})\leq C,
   \end{align*}
   where $\lambda_1$ is a constant such that $b_{10}-\lambda_1-b_{11}e^{\lambda_1\tau}>0$ and $b_{12}-b_{13}e^{\lambda_1\tau}>0$, $C$ is a constant independent of $k$ but dependent on the initial value $\xi$ and $\lambda$.
\end{lemma}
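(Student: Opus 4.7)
The plan is to derive a one-step energy inequality for the composite quantity
\[
H_k := \mathbb{E}|X_k|^2+\frac{l_2\Delta}{1+\lambda_1\Delta}\mathbb{E}|g(X_k,X_{k-M})|^2,
\]
iterate it with the exponential weight $(1+\lambda_1\Delta)^{k}$, and absorb the delayed contributions using the strict inequalities $b_{10}-\lambda_1>b_{11}e^{\lambda_1\tau}$ and $b_{12}>b_{13}e^{\lambda_1\tau}$. To set up the one-step inequality I would rewrite \eqref{DSDDE} as $X_k-\Delta f(X_k,X_{k-M})=X_{k-1}+g(X_{k-1},X_{k-M-1})\Delta W_{k-1}$ and take squared Euclidean norms. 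Because $X_{k-1}$ and $g(X_{k-1},X_{k-M-1})$ are $\mathcal{F}_{t_{k-1}}$-measurable while $\Delta W_{k-1}$ is independent of $\mathcal{F}_{t_{k-1}}$ with mean zero, the stochastic cross term vanishes under expectation and $\mathbb{E}|g(X_{k-1},X_{k-M-1})\Delta W_{k-1}|^2=\Delta\,\mathbb{E}|g(X_{k-1},X_{k-M-1})|^2$. Dropping the non-negative $-\Delta^2\mathbb{E}|f|^2$ term and invoking the third (Khasminskii-type) inequality of Assumption \ref{as2} at $(X_k,X_{k-M})$ to rewrite $2\langle X_k,f(X_k,X_{k-M})\rangle+l_2|g(X_k,X_{k-M})|^2$ yields
\[
(1{+}b_{10}\Delta)\mathbb{E}|X_k|^2+l_2\Delta\mathbb{E}|g(X_k,X_{k-M})|^2+b_{12}\Delta\mathbb{E}|X_k|^{q+1}\le \mathbb{E}|X_{k-1}|^2+\Delta\mathbb{E}|g(X_{k-1},X_{k-M-1})|^2+b_9\Delta+b_{11}\Delta\mathbb{E}|X_{k-M}|^2+b_{13}\Delta\mathbb{E}|X_{k-M}|^{q+1}.
\]

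Next, I close this into a contractive recursion in $H_k$. Since $l_2>1$ and $\lambda_1<b_{10}$, choosing $\Delta_1\le (l_2-1)/\lambda_1$ gives $l_2/(1+\lambda_1\Delta)\ge 1$, whence $\mathbb{E}|X_{k-1}|^2+\Delta\mathbb{E}|g(X_{k-1},X_{k-M-1})|^2\le H_{k-1}$. Splitting $1+b_{10}\Delta=(1+\lambda_1\Delta)+(b_{10}-\lambda_1)\Delta$ and factoring $(1+\lambda_1\Delta)$ out of the $l_2\Delta|g|^2$ term produces
\[
(1+\lambda_1\Delta)H_k+(b_{10}-\lambda_1)\Delta\mathbb{E}|X_k|^2+b_{12}\Delta\mathbb{E}|X_k|^{q+1}\le H_{k-1}+b_9\Delta+b_{11}\Delta\mathbb{E}|X_{k-M}|^2+b_{13}\Delta\mathbb{E}|X_{k-M}|^{q+1}.
\]
Multiplying by $(1+\lambda_1\Delta)^{k-1}$, summing $k=1,\dots,n$, and performing the index shift $j=k-M$ on the two delayed sums on the right produces the crucial factor $(1+\lambda_1\Delta)^M\le e^{\lambda_1\tau}$. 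The strict inequalities $b_{10}-\lambda_1>b_{11}e^{\lambda_1\tau}$ and $b_{12}>b_{13}e^{\lambda_1\tau}$ leave strictly positive coefficients on the weighted sums of $\mathbb{E}|X_k|^2$ and $\mathbb{E}|X_k|^{q+1}$ on the LHS, while the boundary sums over $-M\le j\le 0$ are bounded in terms of $\|\xi\|\le R$ via Assumption \ref{as1}. Dividing by $(1+\lambda_1\Delta)^n$ and using $b_9\Delta\sum_{k=1}^{n}(1+\lambda_1\Delta)^{k-1-n}\le b_9/\lambda_1$ then gives $\sup_{n\ge 0}H_n\le C$.

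For the windowed sum $\Delta\sum_{j=k-M+1}^{k}\mathbb{E}|X_j|^{q+1}$, I would exploit that on the window $[k-M+1,k]$ the weight $(1+\lambda_1\Delta)^{j-1}$ is at least $(1+\lambda_1\Delta)^{k-M}\ge (1+\lambda_1\Delta)^{k}e^{-\lambda_1\tau}$. Hence the windowed sum is controlled by the total weighted sum $\Delta\sum_{j=1}^{k}(1+\lambda_1\Delta)^{j-1}\mathbb{E}|X_j|^{q+1}$ (already bounded by the previous step) multiplied by $e^{\lambda_1\tau}(1+\lambda_1\Delta)^{-k}$, producing a bound independent of $k$.

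The main obstacle I expect is the bookkeeping in the summation and index-shift step: tracking how the geometric weights interact with the shift $j=k-M$, isolating the boundary contribution from $-M\le j\le 0$ into an $R$-dependent constant, and verifying that the specific factor $l_2\Delta/(1+\lambda_1\Delta)$ appearing in the definition of $H_k$ (rather than the naive $l_2\Delta$) is exactly what is needed so that $\mathbb{E}|X_{k-1}|^2+\Delta\mathbb{E}|g(X_{k-1},X_{k-M-1})|^2\le H_{k-1}$ closes without leaving a residual diffusion term on the right. A minor preliminary is the finiteness of $\mathbb{E}|X_k|^{q+1}$ itself, which can be established inductively from the implicit step in \eqref{DSDDE} and the polynomial growth bounds in Assumption \ref{as1}.
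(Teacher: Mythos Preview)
Your proposal is correct and follows essentially the same strategy as the paper's proof: both derive the one-step energy inequality from the BEM recursion together with the Khasminskii-type condition in Assumption~\ref{as2}, iterate with the geometric weight $(1+\lambda_1\Delta)^k$, and absorb the delayed sums via the shift $j=k-M$ and the bound $(1+\lambda_1\Delta)^M\le e^{\lambda_1\tau}$ to obtain $\sup_n H_n\le C$ and the windowed $(q+1)$-moment control. The only notable difference is that the paper localizes with a stopping time $\lambda_\Lambda=\inf\{k:|X_k|>\Lambda\}$ and passes $\Lambda\to\infty$ via Fatou's lemma and dominated convergence to justify taking expectations, whereas you defer integrability to an inductive finiteness-of-moments argument; either route is valid here.
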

\begin{proof}
    First, define a stopping time $\lambda_{\Lambda}=\inf\{k: X_k > \Lambda\}$. Then, from \eqref{DSDDE}, we can see that for any $k \in \mathbb{N}$
    \begin{align}\label{eq---4.3}
      \notag  |X_{k+1}|^2-|X_k|^2 + |X_{k+1}-X_k|^2&=2\big\langle X_{k+1}, X_{k+1}-X_k \big\rangle \\ \notag
        &\leq 2\langle X_{k+1}, f(X_{k+1},X_{k+1-M})\Delta \rangle + |X_{k+1}-X_k|^2+ |g(X_{k},X_{k+1})\Delta W_{k}|^2\\
        &\quad~+ 2\langle X_k, g(X_{k},X_{k-M})\Delta W_k \rangle.
    \end{align}
  Since $b_{10}>b_{11}$ and $b_{12}>b_{13}$, we choose a sufficiently small $\lambda>0$ such that $b_{10}-\lambda_1-b_{11}e^{\lambda_1\tau}>0$ and $b_{12}-b_{13}e^{\lambda_1\tau}>0$. It then follows from \eqref{eq---4.3}, Assumptions \ref{as1} and \ref{as2} that
    \begin{align}\label{eq--4.4}
     \notag   \mathbb{E}&\Big((1+\lambda_1\Delta)|X_{(k+1)\wedge \lambda_{\Lambda}}|^2+l_2\Delta |g(X_{(k+1)\wedge \lambda_{\Lambda}},X_{(k+1)\wedge \lambda_{\Lambda}-M}|^2\Big) \\ \notag
     &\leq\mathbb{E}\Big( |X_{(k+1)\wedge \lambda_{\Lambda}-1}|^2+|g(X_{(k+1)\wedge \lambda_{\Lambda}-1},X_{(k+1)\wedge \lambda_{\Lambda}-1-M})\Delta W_{k\wedge \lambda_{\Lambda}}|^2\Big)+ b_9\Delta-(b_{10}-\lambda_1)\Delta \mathbb{E}(|X_{(k+1)\wedge \lambda_{\Lambda}}|^2)\\
     &\quad~+b_{11}\Delta\mathbb{E}(|X_{(k+1)\wedge \lambda_{\Lambda}-M}|^2)-b_{12}\Delta\mathbb{E}( |X_{(k+1)\wedge \lambda_{\Lambda}}|^{q+1})+b_{13}\mathbb{E}(|X_{(k+1)\wedge \lambda_{\Lambda}-M}|^{q+1}).
    \end{align}
 Furthermore, if necessary, choose a small enough $\Delta>0$ such that $1+\lambda_1 \Delta \leq l_2$. From \eqref{eq--4.4} and the inequality $(1+\lambda_1 \Delta)^M \leq e^{\lambda_1\tau}$ for any $\lambda_1,\Delta >0$, we have
    \begin{align*}
    \notag    \mathbb{E}&\Big(|X_{(k+1)\wedge \lambda_{\Lambda}}|^2+\frac{l_2\Delta}{1+\lambda_1 \Delta} |g(X_{(k+1)\wedge \lambda_{\Lambda}},X_{(k+1)\wedge \lambda_{\Lambda}-M}|^2\Big) \\ \notag
     &\leq (1+\lambda_1\Delta)^{-1}\mathbb{E}\Big(|X_{(k+1)\wedge \lambda_{\Lambda}-1}|^2+\frac{l_2\Delta}{1+\lambda_1 \Delta} |g(X_{(k+1)\wedge \lambda_{\Lambda}-1},X_{(k+1)\wedge \lambda_{\Lambda}-1-M}|^2\Big)\\ \notag
     &\quad~+ (1+\lambda_1 \Delta)^{-1}\Big( b_9\Delta-(b_{10}-\lambda_1)\Delta \mathbb{E}(|X_{(k+1)\wedge \lambda_{\Lambda}}|^2)\\ \notag
     &\quad~+b_{11}\Delta\mathbb{E}(|X_{(k+1)\wedge \lambda_{\Lambda}-M}|^2)-b_{12}\Delta\mathbb{E}( |X_{(k+1)\wedge \lambda_{\Lambda}}|^{q+1})+b_{13}\mathbb{E}(|X_{(k+1)\wedge \lambda_{\Lambda}-M}|^{q+1})\Big)\\ \notag
     &\leq \\ \notag
     &\quad~\vdots \\ \notag
     &\leq (1+\lambda_1\Delta)^{-((k+1)\wedge \lambda_{\Lambda})}\mathbb{E}\Big(|X_{0}|^2+\frac{l_2\Delta}{1+\lambda_1 \Delta} |g(X_{0},X_{-M}|^2\Big)+b_9\Delta\mathbb{E}\Big(\sum\limits_{i=1}^{(k+1)\wedge \lambda_{\Lambda}}(1+\lambda_1 \Delta)^{-i} \Big)\\ \notag
&\quad~-(b_{10}-\lambda_1)\Delta\mathbb{E}\Big( \sum \limits_{i=(k+1)\wedge \lambda_{\Lambda}-M+1}^{(k+1)\wedge \lambda_{\Lambda}}(1+\lambda_1\Delta)^{i-(k+1)\wedge \lambda_{\Lambda}-1}|X_i|^2\Big)\\ \notag
&\quad~+b_{11}\Delta(1+\lambda_1 \Delta)^M\mathbb{E}\Big(\sum\limits_{i=-M+1}^0 (1+\lambda_1\Delta)^{i-(k+1)\wedge \lambda_{\Lambda}-1}|X_i|^2 \Big)\\ \notag
&\quad~-b_{12}\Delta\mathbb{E}\Big( \sum \limits_{i=(k+1)\wedge \lambda_{\Lambda}-M+1}^{(k+1)\wedge \lambda_{\Lambda}}(1+\lambda_1\Delta)^{i-(k+1)\wedge \lambda_{\Lambda}-1}|X_i|^{q+1}\Big)\\
&\quad~+b_{13}\Delta(1+\lambda_1 \Delta)^M\mathbb{E}\Big(\sum\limits_{i=-M+1}^0 (1+\lambda_1\Delta)^{i-(k+1)\wedge \lambda_{\Lambda}-1}|X_i|^{q+1}\Big).
    \end{align*}
    Letting $\Lambda \to \infty $ and applying Fatou's lemma and the dominated convergence theorem, we obtain
    \begin{align*}
         \mathbb{E}&\Big(|X_{k+1}|^2+\frac{l_2\Delta}{1+\lambda_1 \Delta} |g(X_{k+1},X_{k+1-M}|^2\Big) \\ \notag
         &\leq (1+\lambda_1\Delta)^{-(k+1)}\mathbb{E}\Big(|X_{0}|^2+\frac{l_2\Delta}{1+\lambda_1 \Delta} |g(X_{0},X_{-M}|^2\Big)+b_9\Delta\sum\limits_{i=1}^{k+1}(1+\lambda_1 \Delta)^{-i} \\ \notag
&\quad~-(b_{10}-\lambda_1)\Delta \sum \limits_{i=k-M+2}^{k+1}(1+\lambda_1\Delta)^{i-k-2}\mathbb{E}(|X_i|^2)\\ \notag
&\quad~+b_{11}\Delta(1+\lambda_1 \Delta)^M\sum\limits_{i=-M+1}^0 (1+\lambda_1\Delta)^{i-k-2}\mathbb{E}(|X_i|^2) \\ \notag
&\quad~-b_{12}\Delta \sum \limits_{i=k-M+2}^{k+1}(1+\lambda_1\Delta)^{i-k-2}\mathbb{E}(|X_i|^{q+1})\\ \notag
&\quad~+b_{13}\Delta(1+\lambda_1 \Delta)^M\sum\limits_{i=-M+1}^0 (1+\lambda_1\Delta)^{i-k-2}\mathbb{E}(|X_i|^{q+1}).
    \end{align*}
  Since $\lim \limits_{\Delta\to 0}(1+\lambda_1 \Delta)^{-M} =e^{-\lambda_1\tau}$, if necessary, choose a sufficiently small $\Delta_1$ such that for any $\Delta\in(0,\Delta_1]$, we have $(1+\lambda_1 \Delta)^{-M}\geq e^{-0.5\lambda_1\Delta}$. Then, the desired assertion follows.
\end{proof}

The following lemma shows that the numerical solution \eqref{CDSDDE} converges strongly to the underlying solution with an order of $1/2$ over any finite time interval..
\begin{lemma}\label{LM42}
     Let Assumptions \ref{as1}  and \ref{as2} hold, then for any $t > 0$, $\xi\in B(R)$ and $\Delta \in (0,\Delta_1]$, there exists a positive constant $C_T$, independent of $\Delta$ and dependent on $T$, such that for any $t\in [-\tau, T]$,
     \begin{align*}
    \notag   & \mathbb{E}\Big( |x(t_{k+1})-X_{k+1}|^2+l_1\Delta|g(x(t_{k+1}),x(t_{k+1}-\tau))-g(X_{k+1}, X_{k+1-M})|^2 \Big)\\ \notag
    &\leq C_T\big(1+\sup \limits_{t\geq -\tau}\mathbb{E}(|x(t)|^{4q-2})\big) \Delta-(b_1-l_1a_2-\epsilon)\Delta \sum \limits_{i=k-M+2}^{k+1}\mathbb{E}(|x(t_i)-X_i|^2) \\ \notag
   &\quad~-(b_3-l_1a_4)\Delta \sum \limits_{i=k-M+2}^{k+1}\mathbb{E}\big(V(x(t_{i}), X_{i})\big) ,
\end{align*}
where $\epsilon_3$ is a positive constant that satisfies $b_1-l_1a_2-\epsilon -b_2-l_1a_3 >0$.
\end{lemma}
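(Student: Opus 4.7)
Let $e_k := x(t_k) - X_k$ and $G_k := g(x(t_k),x(t_k-\tau)) - g(X_k,X_{k-M})$. The plan is to derive a one-step mean-square recursion that records the implicit monotonicity from Assumption \ref{as2} while absorbing the Brownian-increment surplus $\Delta\mathbb{E}|G_k|^2$ into the next-step quantity $l_1\Delta\mathbb{E}|G_{k+1}|^2$ carried on the left-hand side; iterating this recursion over one window of length $M=\tau/\Delta$ and invoking an induction hypothesis on the previous window then turns the coefficient inequalities of Assumption \ref{as2} into the claimed negative window sums.

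First I would subtract the BEM step \eqref{DSDDE} from the integral form of \eqref{SDDE} on $[t_k,t_{k+1}]$ to write
\begin{equation*}
e_{k+1} - e_k = \Delta\,\tilde f_{k+1} + G_k\Delta W_k + R_k,
\end{equation*}
where $\tilde f_{k+1} := f(x(t_{k+1}),x(t_{k+1}-\tau)) - f(X_{k+1},X_{k+1-M})$ and $R_k$ collects the Riemann and It\^o residuals on $[t_k,t_{k+1}]$. Using the polynomial Lipschitz condition on $f$ (exponent $q-1$) and the Lipschitz condition on $g$ from Assumption \ref{as1}, together with the $p^*=4q-2$ moment bound on $x$ and standard mean-square H\"older continuity of $x$ on intervals of length $\Delta$, one obtains $\mathbb{E}|R_k|^2 \le C(1+\sup_t\mathbb{E}|x(t)|^{4q-2})\Delta^2$, which is precisely why the $4q-2$ moment appears in the statement. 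Applying the elementary identity $|e_{k+1}|^2 + |e_{k+1}-e_k|^2 = |e_k|^2 + 2\langle e_{k+1}, e_{k+1}-e_k\rangle$, taking expectation, and using (i) the drift monotonicity in Assumption \ref{as2} on $2\Delta\,\mathbb{E}\langle e_{k+1}, \tilde f_{k+1}\rangle$, (ii) the $\mathcal{F}_{t_k}$-measurability of $G_k$ together with It\^o isometry to reduce $2\mathbb{E}\langle e_{k+1}, G_k\Delta W_k\rangle$ to $\Delta\,\mathbb{E}|G_k|^2$ plus $O(\Delta^{3/2})$ cross terms, and (iii) Cauchy--Schwarz on $\mathbb{E}\langle e_{k+1}, R_k\rangle$, I arrive at
\begin{equation*}
(1+b_1\Delta)\mathbb{E}|e_{k+1}|^2 + b_3\Delta\,\mathbb{E}V_{k+1} \le \mathbb{E}|e_k|^2 + b_2\Delta\,\mathbb{E}|e_{k+1-M}|^2 + \Delta\,\mathbb{E}|G_k|^2 + C\bigl(1+\textstyle\sup_t\mathbb{E}|x(t)|^{4q-2}\bigr)\Delta^2,
\end{equation*}
where $V_i := V(x(t_i), X_i)$.

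Next I add $l_1\Delta\,\mathbb{E}|G_{k+1}|^2$ to both sides, bounding the added term on the right via the diffusion inequalities of Assumption \ref{as1} by $l_1\Delta[a_2\mathbb{E}|e_{k+1}|^2 + a_3\mathbb{E}|e_{k+1-M}|^2 + a_4\mathbb{E}V_{k+1} + a_5\mathbb{E}V_{k+1-M}]$. Since $l_1>1$, the residual $\Delta\,\mathbb{E}|G_k|^2$ on the right is dominated by $l_1\Delta\,\mathbb{E}|G_k|^2$, which is already stored on the left at the previous step (a non-positive leftover can be discarded). Writing $A_k := \mathbb{E}|e_k|^2 + l_1\Delta\,\mathbb{E}|G_k|^2$ and rearranging, the one-step recursion takes the form
\begin{equation*}
A_{k+1} + (b_1-l_1 a_2)\Delta\,\mathbb{E}|e_{k+1}|^2 + (b_3-l_1 a_4)\Delta\,\mathbb{E}V_{k+1} \le A_k + (b_2 + l_1 a_3)\Delta\,\mathbb{E}|e_{k+1-M}|^2 + l_1 a_5\Delta\,\mathbb{E}V_{k+1-M} + C\Delta^2.
\end{equation*}
Summing this recursion over $j\in\{k-M+1,\dots,k\}$, reindexing the delayed terms $j\mapsto j-M$, and substituting the induction hypothesis (the claim of the lemma at index $k-M+1$) to express $A_{k-M+1}$ as $C_T\Delta$ minus the window sums over $[k-2M+2,k-M+1]$, the strict inequalities $b_1 > b_2 + l_1(a_2+a_3)$ (with $\epsilon>0$ chosen so that $b_1-l_1a_2-\epsilon-b_2-l_1a_3>0$) and $b_3 > l_1(a_4+a_5)$ produce non-positive net coefficients in front of the previous-window contributions, which can be discarded. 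What remains is precisely the window $[k-M+2,k+1]$ on the left with coefficients at least $b_1-l_1a_2-\epsilon$ and $b_3-l_1a_4$, and the $M$ accumulated $\Delta^2$ residuals collect into a $C_T\Delta$ leading term; this is the asserted inequality.

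The main obstacle is the implicit coupling between $X_{k+1}$ and $\Delta W_k$, which prevents the cross terms $\mathbb{E}\langle \tilde f_{k+1}, G_k\Delta W_k\rangle$ and $\mathbb{E}\langle R_k, G_k\Delta W_k\rangle$ from vanishing. Controlling them as $O(\Delta^{3/2})$ through Cauchy--Schwarz requires the polynomial growth of $f$ combined with the full $p^*=4q-2$ moment bound on $x$, which is exactly why Assumption \ref{as2}'s Khasminskii-type condition is pitched at that exponent. The algebraic bookkeeping that takes the summed one-step recursion, applies the induction hypothesis, and reads off a strictly negative residual from the coefficient inequalities is mechanical once the Lyapunov-type functional $A_k$ is chosen correctly; it is precisely this choice that the $+\,l_1\Delta|G_{k+1}|^2$ on the left-hand side of the statement records.
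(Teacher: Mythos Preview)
Your overall plan---set $A_k=\mathbb{E}|e_k|^2+l_1\Delta\,\mathbb{E}|G_k|^2$, derive a one-step recursion, and telescope---coincides with the paper's. The defect is in how you handle the stochastic terms in steps (ii)--(iii). Expanding $2\mathbb{E}\langle e_{k+1},G_k\Delta W_k\rangle$ via $e_{k+1}=e_k+\Delta\tilde f_{k+1}+G_k\Delta W_k+R_k$ leaves the cross term $2\Delta\,\mathbb{E}\langle\tilde f_{k+1},G_k\Delta W_k\rangle$, which you claim is $O(\Delta^{3/2})$ from Cauchy--Schwarz and the $p^*=4q-2$ moment bound on $x$. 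But $\tilde f_{k+1}$ also contains $f(X_{k+1},X_{k+1-M})$, so controlling $\mathbb{E}|\tilde f_{k+1}|^2$ would require moments of $X_{k+1}$ of order roughly $2q$; Lemma~\ref{LM57} supplies only $\sup_i\mathbb{E}|X_i|^2$ and a window-summed $(q{+}1)$st moment, which is not enough when $q>2$. Likewise, lumping the It\^o residual into $R_k$ and bounding $2\mathbb{E}\langle e_{k+1},R_k\rangle$ by Cauchy--Schwarz (or by Young with weight $\epsilon\Delta$) loses a power of $\Delta$: since $\mathbb{E}|R_k^{\text{It\^o}}|^2=O(\Delta^2)$, one gets $\tfrac{1}{\epsilon\Delta}\mathbb{E}|R_k|^2=O(\Delta)$ per step, and summing over $T/\Delta$ steps yields $O(T)$ rather than $O(T\Delta)$.

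The paper resolves both problems at once by a pathwise splitting \emph{before} taking expectation: write $e_{k+1}=e_k+(e_{k+1}-e_k)$ and apply
\[
2\bigl\langle e_{k+1}-e_k,\;G_k\Delta W_k+R_k^{\text{It\^o}}\bigr\rangle\le |e_{k+1}-e_k|^2+2|G_k\Delta W_k|^2+2|R_k^{\text{It\^o}}|^2.
\]
The $|e_{k+1}-e_k|^2$ cancels against the same term coming from the starting identity, $\tilde f_{k+1}$ never appears on its own, and the It\^o residual enters as $2\,\mathbb{E}|R_k^{\text{It\^o}}|^2=O(\Delta^2)$ without any $1/\Delta$ penalty; the remaining term $2\langle e_k,\,\cdot\,\rangle$ has zero mean. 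This is also why the coefficient in front of $\mathbb{E}|G_k|^2$ is $2\Delta$ (not $\Delta$), so it is the hypothesis $l_1>2$ of Assumption~\ref{as2}---rather than the $l_1>1$ you invoke---that is actually used. Finally, the paper simply iterates the one-step bound from $k=0$, using $e_i=0$ for $i\le 0$ so that the delayed sums truncate; your induction over successive $\tau$-windows reaches the same endpoint for finite $T$ but is unnecessary here.
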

\begin{proof}
    First, for simplicity, setting $e_k= x(t_k)-X_k$, $F_{k}=f(x(t_{k}),x(t_k-\tau))-f(X_k, X_{k-M})$ and $G_{k}=g(x(t_{k}),x(t_k-\tau))-g(X_k, X_{k-M})$. Then, by \eqref{SDDE}, \eqref{DSDDE} and the Young inequality and we have
    \begin{align*}
    \notag    |e_{k+1}|^2&-|e_k|^2+|e_{k+1}-e_k|^2
 \\ \notag
 &= 2\langle e_{k+1}, e_{k+1}-e_k \rangle\\ \notag
    &=2\Big\langle e_{k+1}, F_{k+1}\Delta+G_k \Delta W_k +\int_{t_k}^{t_{k+1}}\big(f(x(s),x(s-\tau))-f(x(t_{k+1}),x(t_{k+1}-\tau))\big)ds \\ \notag
    &\quad +\int_{t_k}^{t_{k+1}}\big(g(x(s),x(s-\tau))-g(x(t_{k}),x(t_{k}-\tau))\big)dW(s) \Big\rangle \\ \notag
    &=2\langle e_{k+1}, F_{k+1}\Delta \rangle+ |e_{k+1}-e_k|^2 \\ \notag
    &\quad+2|G_k\Delta W_k|^2+ 2\Big|\int_{t_k}^{t_{k+1}}\big(g(x(s),x(s-\tau))-g(x(t_{k}),x(t_{k}-\tau))\big)dW(s)\Big|^2 \\ \notag
    &\quad + \epsilon \Delta|e_{k+1}|^2 +\frac{1}{\epsilon \Delta} \Big|\int_{t_k}^{t_{k+1}}\big(f(x(s),x(s-\tau))-f(x(t_{k+1}),x(t_{k+1}-\tau))\big)ds\Big|^2\\ \notag
    &\quad~+2\Big\langle e_k,G_k \Delta W_k+\int_{t_k}^{t_{k+1}}\big(g(x(s),x(s-\tau))-g(x(t_{k}),x(t_{k}-\tau))\big)dW(s) \Big\rangle.
    \end{align*}
   Since $b_1-l_1a_2 -b_2-l_1a_3 >0$, choose $\epsilon _6>0$ small enough such that $b_1-l_1a_2-\epsilon _6-b_2-l_1a_3 >0$, this together with Assumptions \ref{as1} and \ref{as2} implies
    \begin{align}\label{eq-4.2}
 \notag   |e_{k+1}|^2+l_1\Delta|G_{k+1}|^2 &= |e_k|^2+ 2\Delta |G_k|^2 -(b_1-l_1a_2-\epsilon _6)\Delta|e_{k+1}|^2+(b_2+l_1a_3)\Delta|e_{k-M+1}|^2\\ \notag
      &\quad-(b_3-l_1a_4)\Delta V(x(t_{k+1}, X_{k+1}))+l_1a_5 \Delta V(x(t_{k-M+1}, X_{k-M+1})) \\ \notag
    & \quad + 2\Big|\int_{t_k}^{t_{k+1}}\big(g(x(s),x(s-\tau))-g(x(t_{k}),x(t_{k}-\tau))\big)dW(s)\Big|^2 \\ \notag
    &\quad+\frac{1}{\epsilon _6\Delta} \Big|\int_{t_k}^{t_{k+1}}\big(f(x(s),x(s-\tau))-f(x(t_{k+1}),x(t_{k+1}-\tau))\big)ds\Big|^2\\
    &\quad+2\Big\langle e_k,G_k \Delta W_k+\int_{t_k}^{t_{k+1}}\big(g(x(s),x(s-\tau))-g(x(t_{k}),x(t_{k}-\tau))\big)dW(s) \Big\rangle.
    \end{align}
 By Assumption \ref{as1} and the H\"older inequality, we have
 \begin{align}\label{eq-4.3}
 \notag    \mathbb{E}&\Big(\Big|\int_{t_k}^{t_{k+1}}\big(f(x(s),x(s-\tau))-f(x(t_{k+1}),x(t_{k+1}-\tau))\big)ds\Big|^2\Big) \\ \notag
     &\leq \Delta \mathbb{E} \int_{t_k}^{t_{k+1}} \Big|f(x(s),x(s-\tau))-f(x(t_{k+1}),x(t_{k+1}-\tau))\Big|^2ds \\ \notag
    &\leq a_1^2\Delta \int_{t_k}^{t_{k+1}} \mathbb{E}\Big(\big( 1+|x(s)|^{q-1}+|x(s-\tau)|^{q-1}+|x(t_{k+1})|^{q-1}+|x(t_{k+1}-\tau)|^{q-1}\big)\big(|x(s)-x(t_{k+1})|\\ \notag
    &\quad~+|x(s-\tau)-x(t_{k+1}-\tau)|\big)\Big)^2ds \\ \notag
    &\leq  a_1\Delta \int_{t_k}^{t_{k+1}} \Big(\mathbb{E}\big( 1+|x(s)|^{q-1}+|x(s-\tau)|^{q-1}+|x(t_{k+1})|^{q-1}+|x(t_{k+1}-\tau)|^{q-1}\big)^{\frac{4q-2}{q-1}}\Big)^{\frac{2q-2}{4q-2}}\\
&\quad~\times \Big(\mathbb{E}\big(|x(s)-x(t_{k+1})|+|x(s-\tau)-x(t_{k+1}-\tau)|\big)^{\frac{4q-2}{q}}\Big)^{\frac{2q}{4q-2}}.
 \end{align}
 Note that for any $k\in \mathbb{N}$, by Assumption \ref{as1}, the Burkholder–Davis–Gundy inequality, and the elementary inequalities \eqref{EI-1} and \eqref{EI-2}, we have
 \begin{align*}
  \notag   \mathbb{E}\Big(|x(s)-x(t_{k+1})|^{\frac{4q-2}{q}}\Big)&= \mathbb{E}\Big(\Big|\int_s^{t_{k+1}}f(x(s),x(s-\tau))ds+\int_{s}^{t_{k+1}}g(x(s),x(s-\tau))dW(s)\Big|^{\frac{4q-2}{q}}\Big) \\
     &\leq C\big(1+\sup \limits_{t\geq -\tau}\mathbb{E}(|x(t)|^{4q-2})\big)\Delta^{\frac{4q-2}{2q}}.
 \end{align*}
 Thus, combining this with Assumption \ref{as3} and \eqref{eq-4.3} yields
 \begin{align}\label{eq-4.6}
       \mathbb{E}\Big(\Big|\int_{t_k}^{t_{k+1}}\big(f(x(s),x(s-\tau))-f(x(t_{k+1}),x(t_{k+1}-\tau))\big)ds\Big|^2\Big)&\leq C\big(1+\sup \limits_{t\geq -\tau}\mathbb{E}(|x(t)|^{4q-2})\big) \Delta^3.
 \end{align}
 Similarly, we also have
 \begin{align}\label{eq-4.7}
     \mathbb{E}\Big(\Big|\int_{t_k}^{t_{k+1}}\big(g(x(s),x(s-\tau))-g(x(t_{k}),x(t_{k}-\tau))\big)dW(s)\Big|^2\Big)\leq C\big(1+\sup \limits_{t\geq -\tau}\mathbb{E}(|x(t)|^{4q-2})\big) \Delta^2.
 \end{align}
Since $b_1-l_1a_2-\epsilon _6-b_2-l_1a_3>0$ and $b_3-l_1a_4-l_1a_5>0$, by taking the expectation on both sides of \eqref{eq-4.2} and substituting \eqref{eq-4.6} and \eqref{eq-4.7} into the result, we can see that
\begin{align*}
    \notag   & \mathbb{E}\Big( |e_{k+1}|^2+l_1\Delta|G_{k+1}|^2 \Big)\\ \notag
    &\leq \mathbb{E}\Big(|e_k|^2+ l_1\Delta |G_k|^2 \Big)-(b_1-l_1a_2-\epsilon _6)\Delta \mathbb{E}(|e_{k+1}|^2)+(b_2+l_1a_3)\Delta\mathbb{E}(|e_{k-M+1}|^2)\\ \notag
      &\quad-(b_3-l_1a_4)\Delta \mathbb{E}\big(V(x(t_{k+1}), X_{k+1})\big)+l_1a_5 \Delta\mathbb{E}\big( V(x(t_{k-M+1}), X_{k-M+1})\big) +C\big(1+\sup \limits_{t\geq -\tau}\mathbb{E}(|x(t)|^{4q-2})\big) \Delta^2 \\ \notag
       &\leq \\ \notag
    &\quad~\vdots \\ \notag
    &\leq -(b_1-l_1a_2-\epsilon _6)\Delta \sum \limits_{i=k-M+2}^{k+1}\mathbb{E}(|e_i|^2)
   -(b_3-l_1a_4)\Delta \sum \limits_{i=k-M+2}^{k+1}\mathbb{E}\big(V(x(t_{i}), X_{i})\big)  \\ \notag
   &\quad~+CT\big(1+\sup \limits_{t\geq -\tau}\mathbb{E}(|x(t)|^{4q-2})\big) \Delta,
\end{align*}
where we have used the fact that $l_1>2$.
\end{proof}
Next, we formulate the key lemma, which is essential for proving the strong convergence of numerical solutions in the infinite

\begin{lemma}
    Suppose Assumptions \ref{as1} and \ref{as2} hold. There exists a positive constant $\Delta_2$ such that for any $\Delta \in (0,\Delta_1 \wedge \Delta_2]$, any two numerical solutions $\{X_k\}_{k\geq -M}$ and $\{Y_k\}_{k\geq -M}$ of \eqref{DSDDE} with different initial values have the following property
     \begin{align*}
    \notag  \mathbb{E}&\Big(|X_{k+1}-Y_{k+1}|^2 + \frac{l_1\Delta}{1+\lambda_2 \Delta}|g(X_{k+1},X_{k+1-M})-g(Y_{k+1},Y_{k+1-M})|^2\Big) \\ \notag
     &\leq \mathbb{E}\Big(|X_{0}-Y_{0}|^2 + \frac{l_1\Delta}{1+\lambda_2 \Delta}|g(X_{0},X_{-M})-g(Y_{0},Y_{-M})|^2\Big)\Big(1+\lambda_2 \Delta \Big)^{-(k+1)}\\ \notag
     &\quad~-(b_1-l_1a_2-\lambda_2)\Delta \sum \limits_{i=k-M+2}^{k+1}(1+\lambda_2\Delta)^{i-k-2}\mathbb{E}(|X_{i}-Y_{i}|^2) \\ \notag
     &\quad~+b_2(1+\lambda_2 \Delta)^M \Delta \sum \limits_{i=-M+1}^{0}(1+\lambda_2\Delta)^{i-k-2}\mathbb{E}(|X_{i}-Y_{i}|^2) \\ \notag
     &\quad~+ l_1\epsilon_2(1+\lambda_2 \Delta)^M \Delta \sum \limits_{i=-M+1}^{0}(1+\lambda_2\Delta)^{i-k-2}\mathbb{E}\Big(|g_2(X_{i})-g_2(Y_{i})|^2\Big)\\
     &\quad~-(b_3-l_1a_4)\Delta \sum \limits_{i=k-M+2}^{k+1}(1+\lambda_2\Delta)^{i-k-2}\mathbb{E}\big(V(X_i, Y_{i})\big) .
   \end{align*}
    where $\lambda_2$ is a positive constant that satifies $b_1-l_1a_2-(\lambda_2 \vee \epsilon_3) -(b_2+l_1a_3 )e^{\lambda_2 \tau}>0$.
\end{lemma}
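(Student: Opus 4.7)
The strategy is to adapt the discrete It\^o-type argument used in Lemma \ref{LM42} and combine it with the geometric weighting trick from Lemma \ref{LM57}, applied now to the error process $e_k := X_k - Y_k$. Setting $F_k := f(X_k,X_{k-M}) - f(Y_k,Y_{k-M})$ and $G_k := g(X_k,X_{k-M}) - g(Y_k,Y_{k-M})$, the BEM scheme \eqref{DSDDE} gives the clean recursion $e_{k+1}-e_k = F_{k+1}\Delta + G_k\Delta W_k$, with none of the discretisation residuals that had to be absorbed in Lemma \ref{LM42}. This absence of error terms is the reason the resulting inequality can be iterated over an arbitrarily long horizon without a time-growing constant.

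For the one-step estimate, I would begin with the Pythagorean identity $|e_{k+1}|^2 - |e_k|^2 + |e_{k+1}-e_k|^2 = 2\langle e_{k+1}, e_{k+1}-e_k\rangle$, substitute the recursion, split $\langle e_{k+1}, G_k\Delta W_k\rangle = \langle e_k, G_k\Delta W_k\rangle + \langle e_{k+1}-e_k, G_k\Delta W_k\rangle$ to isolate a pure martingale increment, and cancel the cross terms $2\Delta\langle F_{k+1}, G_k\Delta W_k\rangle$ that appear with opposite signs inside $2\langle e_{k+1},e_{k+1}-e_k\rangle$ and $|e_{k+1}-e_k|^2$. Taking expectation and using the $\mathcal{F}_{t_k}$-measurability of $G_k$ yields $\mathbb{E}|e_{k+1}|^2 \leq \mathbb{E}|e_k|^2 + 2\Delta\mathbb{E}\langle e_{k+1},F_{k+1}\rangle + \Delta\mathbb{E}|G_k|^2$. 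I would then apply the first monotonicity bound in Assumption \ref{as2} to the inner product, add $l_1\Delta\mathbb{E}|G_{k+1}|^2$ to both sides, and bound it by a partial use of Assumption \ref{as1} in which only the $\epsilon_1|g_1|^2$ piece is expanded (into $a_2|e_{k+1}|^2 + a_4 V(X_{k+1},Y_{k+1})$) while $\epsilon_2|g_2(X_{k+1-M})-g_2(Y_{k+1-M})|^2$ is left intact. Rearranging so that the coefficient of $|e_{k+1}|^2$ on the left becomes $1+\lambda_2\Delta$, dividing through, and using $\Delta|G_k|^2 \leq \tfrac{l_1\Delta}{1+\lambda_2\Delta}|G_k|^2$ (which holds once $1+\lambda_2\Delta \leq l_1$ and thereby dictates the threshold $\Delta_2$) produces the one-step recursion
\[
\tilde H_{k+1} \leq (1+\lambda_2\Delta)^{-1}\tilde H_k + (1+\lambda_2\Delta)^{-1}R_{k+1},
\]
where $\tilde H_k := \mathbb{E}|e_k|^2 + \tfrac{l_1\Delta}{1+\lambda_2\Delta}\mathbb{E}|G_k|^2$ and $R_{k+1}$ collects the four signed terms $-(b_1-l_1a_2-\lambda_2)\Delta|e_{k+1}|^2$, $b_2\Delta|e_{k+1-M}|^2$, $-(b_3-l_1a_4)\Delta V(X_{k+1},Y_{k+1})$ and $l_1\epsilon_2\Delta|g_2(X_{k+1-M})-g_2(Y_{k+1-M})|^2$.

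Iterating the recursion from $k+1$ down to $0$ produces the leading decay $(1+\lambda_2\Delta)^{-(k+1)}\tilde H_0$ together with a geometric sum $\sum_{j=1}^{k+1}(1+\lambda_2\Delta)^{j-k-2}R_j$. The main obstacle, and the part of the proof that really needs care, is the bookkeeping of the delay sums: after re-indexing $i=j-M$ in the $|e_{j-M}|^2$ and $|g_2(X_{j-M})-g_2(Y_{j-M})|^2$ contributions, each sum acquires the overall factor $(1+\lambda_2\Delta)^M$ and splits into an initial-data range $i\in[-M+1,0]$ and an interior range $i\in[1,k+1-M]$. For the interior indices I would invoke the second line of Assumption \ref{as1} to replace $\epsilon_2|g_2(X_i)-g_2(Y_i)|^2$ by $a_3|e_i|^2 + a_5 V(X_i,Y_i)$ so that it can be combined with the decay terms; the resulting aggregate coefficients $-(b_1-l_1a_2-\lambda_2)+(b_2+l_1a_3)(1+\lambda_2\Delta)^M$ and $-(b_3-l_1a_4)+l_1a_5(1+\lambda_2\Delta)^M$ are non-positive by the hypothesis on $\lambda_2$ together with $(1+\lambda_2\Delta)^M \leq e^{\lambda_2\tau}$ and the strict inequality $b_3>l_1(a_4+a_5)$ from Assumption \ref{as2}, so these interior contributions can be discarded. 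What survives is exactly the four sums in the stated conclusion: the $-(b_1-l_1a_2-\lambda_2)$ and $-(b_3-l_1a_4)$ sums over the terminal window $i\in[k-M+2,k+1]$, and the $b_2$ and $l_1\epsilon_2$ sums over the initial-data window $i\in[-M+1,0]$.
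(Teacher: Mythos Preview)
Your proposal is correct and follows essentially the same route as the paper: the Pythagorean identity for $|e_{k+1}|^2-|e_k|^2+|e_{k+1}-e_k|^2$, the split of $\langle e_{k+1},G_k\Delta W_k\rangle$ to isolate the martingale increment, the addition of $l_1\Delta|G_{k+1}|^2$ with only the $\epsilon_1|g_1|^2$ part expanded, division by $1+\lambda_2\Delta$ using $1+\lambda_2\Delta\le l_1$ (which fixes $\Delta_2$), and geometric iteration. Your bookkeeping of the delay sums---re-indexing, splitting into the initial window $[-M+1,0]$ and the interior $[1,k+1-M]$, and showing the interior coefficients are non-positive via $(1+\lambda_2\Delta)^M\le e^{\lambda_2\tau}$---is in fact more explicit than what the paper writes out.
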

\begin{proof}
    First, for simplicity, setting $D_k=X_k-Y_k$, $\overline{F}_{k}=f(X_{k},X_{k-M})-f(Y_k,Y_{k-M})$, and $\overline{G}_{k}=g(X_{k},X_{k-M})-g(Y_k,Y_{k-M})$, then, from \eqref{DSDDE}, we have
    \begin{align*}
     \notag   |D_{k+1}|^2-|D_k|^2+|D_{k+1}-D_k|^2 &=2\Big\langle D_{k+1},D_{k+1}-D_k  \rangle \\ \notag
     &=2\Big \langle D_{k+1}, \overline{F}_{k+1}\Delta+\overline{G}_k\Delta W_k \Big\rangle \\ \notag
     &\leq 2\Big\langle D_{k+1}, \overline{F}_{k+1}\Delta \Big \rangle + |D_{k+1}-D_k|^2 + |\overline{G}_k\Delta W_k|^2+2\Big\langle D_{k}, \overline{G}_{k}\Delta W_k \Big \rangle.
    \end{align*}
    Since $b_1-l_1a_2 -\epsilon_3-b_2-l_1a_3 >0$ and $b_3-l_1a_4-l_1a_5 >0$, choose $\lambda_2 >0$ small enough such that $b_1-l_1a_2-(\lambda_2 \vee \epsilon_3) -(b_2+l_1a_3 )e^{\lambda_2 \tau}>0$ and $b_3-l_1a_4-l_1a_5e^{\lambda_2 \tau} >0$, this together with Assumptions \ref{as1} and \ref{as2} shows that
     \begin{align}\label{eq-4.8}
   \notag  (1+\lambda_2\Delta)&  |D_{k+1}|^2 + l_1\Delta|\overline{G}_{k+1}|^2\\ \notag
   &\leq |D_k|^2+2|\overline{G}_k \Delta W_k|^2-(b_1-l_1a_2-\lambda_2)\Delta|D_{k+1}|^2+b_2\Delta|D_{k-M+1}|^2\\
     &\quad -(b_3-l_1a_4)\Delta V(X_{k+1}, Y_{k+1})+ l_1\epsilon_2\Delta|g_2(X_{k-M+1})-g_2(Y_{k-M+1})|^2+2\Big\langle D_{k}, \overline{G}_{k}\Delta W_k \Big \rangle.
   \end{align}
   Furthermore, if necessary, choose a small enough $\Delta$ such that $2 \leq l_1/(1+\lambda_2\Delta)$. Then, by \eqref{eq-4.8} and the inequality $(1+\lambda_2 \Delta)^M \leq e^{\lambda_2 \tau}$ for any $\lambda_2, \Delta \geq 0$, we have
    \begin{align*}
    \notag  \mathbb{E}&\Big(|D_{k+1}|^2 + \frac{l_1\Delta}{1+\lambda_2 \Delta}|\overline{G}_{k+1}|^2\Big) \\ \notag
       &\leq  \Bigg(1+\lambda_2 \Delta\Bigg)^{-1} \Bigg(\mathbb{E}\Big(|D_{k}|^2 + \frac{l_1\Delta}{1+\lambda_2 \Delta}|\overline{G}_{k}|^2\Big)-(b_1-l_1a_2-\lambda_2)\Delta)\mathbb{E}(|D_{k+1}|^2)\\ \notag
     &\quad~ +b_2\Delta \mathbb{E}(|D_{k-M+1}|^2)-(b_3-l_1a_4))\Delta\mathbb{E}\Big(V(X_{k+1}, Y_{k+1})\Big)+ l_1\epsilon_2\Delta\mathbb{E}\Big(|g_2(X_{k-M+1})-g_2(Y_{k-M+1})|^2\Big)\Bigg) \\ \notag
     &\leq \\ \notag
     &\quad~\vdots \\ \notag
     &\leq \mathbb{E}\Big(|D_{0}|^2 + \frac{l_1\Delta}{1+\lambda_2 \Delta}|\overline{G}_{0}|^2\Big)\Big(1+\lambda_2 \Delta \Big)^{-(k+1)}-(b_1-l_1a_2-\lambda_2)\Delta \sum \limits_{i=k-M+2}^{k+1}(1+\lambda_2\Delta)^{i-k-2}\mathbb{E}(|D_i|^2) \\ \notag
     &\quad~+b_2(1+\lambda_2 \Delta)^M \Delta \sum \limits_{i=-M+1}^{0}(1+\lambda_2\Delta)^{i-k-2}\mathbb{E}(|D_i|^2) \\ \notag
     &\quad~+ l_1\epsilon_2(1+\lambda_2 \Delta)^M \Delta \sum \limits_{i=-M+1}^{0}(1+\lambda_2\Delta)^{i-k-2}\mathbb{E}\Big(|g_2(X_{i})-g_2(Y_{i})|^2\Big)\\
     &\quad~-(b_3-l_1a_4)\Delta \sum \limits_{i=k-M+2}^{k+1}(1+\lambda_2\Delta)^{i-k-2}\mathbb{E}\big(V(X_i, Y_{i})\big) .
   \end{align*}
\end{proof}

We now state one of the central results of this work.

\begin{theorem}\label{T--2}
 Suppose Assumptions \ref{as1}, \ref{as2} and \ref{as3} hold, then the numerical solution of \eqref{DSDDE} converges strongly to the underlying solution of \eqref{SDDE} in the infinite horizon, i.e., for any $k\geq -M$ and $\xi \in B(R)$
 \begin{displaymath}
       \mathbb{E}\Big(|x(t_k; 0, \xi)-X_{k}^{0,\xi}|^2\Big) \leq C \Delta,
 \end{displaymath}
  where $C$ is a constant independent of $t$ but dependent on the initial value $\xi$.
\end{theorem}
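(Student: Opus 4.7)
My plan is to bootstrap the finite-horizon estimate from Lemma \ref{LM42} to an infinite-horizon bound via the time-homogeneous Markov property (Lemma \ref{L--5}) combined with the contraction-type estimate established in the immediately preceding lemma. A key preliminary observation is that the constant $C_T$ in Lemma \ref{LM42} grows only linearly in $T$: each of the $T/\Delta$ time-steps contributes an $O(\Delta^2)$ error from the Brownian increments, so the telescoping yields $C_T=O(T)$ rather than a Grönwall-type $O(e^{CT})$. This linear growth is precisely what permits a fixed choice of the ``refresh horizon'' $T$, independent of $\Delta$, while preserving the $O(\Delta)$ rate.

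Fix $T=N\Delta$ with $N\in\mathbb{N}$ to be chosen later. For any $k\ge N$, introduce an auxiliary BEM trajectory $\{\tilde X_j\}_{j\ge k-N-M}$ which coincides with the true solution, $\tilde X_j=x(t_j)$, on the initial segment $j\in[k-N-M,\,k-N]$ and evolves according to \eqref{DSDDE} for $j>k-N$. The triangle inequality gives
\begin{equation*}
  \mathbb{E}|x(t_k)-X_k|^2 \le 2\mathbb{E}|x(t_k)-\tilde X_k|^2 + 2\mathbb{E}|\tilde X_k - X_k|^2.
\end{equation*}
For the first summand, conditioning on $\mathcal F_{t_{k-N}}$ and applying Lemma \ref{LM42} together with the uniform moment bounds on $x$ (Section 3) and on $X$ (Lemma \ref{LM57}) yields $\mathbb{E}|x(t_k)-\tilde X_k|^2 \le C_T\Delta$ uniformly in $k$. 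For the second summand, a time-shifted application of the preceding contraction lemma (legitimized by the homogeneity in Lemma \ref{L--5}) bounds every surviving term on its right-hand side by the decay factor $(1+\lambda_2\Delta)^{-N}\le e^{-\lambda_2 T/2}$ times a quantity depending on the error segment at $t_{k-N}$.

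Setting $\phi(k):=\sup_{-M\le i\le k}\mathbb{E}|x(t_i)-X_i|^2$, I expect a recursion of the form
\begin{equation*}
  \phi(k) \le 2C_T\Delta + C\,e^{-\lambda_2 T/2}\,\phi(k-N),
\end{equation*}
valid for $k\ge N$, together with the base estimate $\phi(j)\le C_T\Delta$ for $j\le N$ from Lemma \ref{LM42} directly. Choosing $T$ large enough (but still independent of $\Delta$) so that $Ce^{-\lambda_2 T/2}\le 1/2$, and iterating across blocks of length $N$, the resulting geometric series sums to give the uniform bound $\phi(k)\le 4C_T\Delta$, which is the claim.

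The main technical obstacle will be controlling the residual ``initial-segment'' contributions in the contraction lemma, in particular the sums involving $\mathbb{E}|g_2(x(t_i))-g_2(X_i)|^2$, which via Assumption \ref{as1} reduce to bounding $\mathbb{E}V(x(t_i),X_i)$. These terms are not a priori of order $\phi(k-N)$, so closing the recursion requires carrying through a simultaneous induction on $\phi$ together with an appropriate $V$-modulated segment quantity, using Hölder's inequality with the $p^*$-moment bound on $x$ from Assumption \ref{as2} and a matching higher-moment bound on $X$ (extending Lemma \ref{LM57}), and exploiting the exponential prefactor $e^{-\lambda_2 T/2}$ to absorb any terms that remain merely bounded rather than small.
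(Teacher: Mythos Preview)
Your overall architecture---fix a block length $T$, split $x(t_k)-X_k$ via an auxiliary BEM trajectory into a finite-horizon piece (Lemma \ref{LM42}) plus a two-trajectory piece (the preceding contraction lemma), then iterate across blocks with the geometric factor $(1+\lambda_2\Delta)^{-N}$---is precisely the paper's strategy, and your identification of the residual initial-segment $g_2$-terms (hence $V$-terms) as the crux is correct.

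The gap is in your proposed resolution of that crux. Absorbing a ``merely bounded'' term by the prefactor $e^{-\lambda_2 T/2}$ cannot preserve the $O(\Delta)$ rate: if the $V$-contribution is only $O(1)$, your recursion becomes $\phi(k)\le 2C_T\Delta + C'e^{-\lambda_2 T/2} + Ce^{-\lambda_2 T/2}\phi(k-N)$, which iterates to $O(\Delta)+O(e^{-\lambda_2 T/2})$, and the second piece does not vanish with $\Delta$. A H\"older splitting against $p^*$-moments likewise produces $(\mathbb E|e_i|^2)^{1-\alpha}$ with $\alpha>0$ and again loses the rate. The paper avoids all of this by a structural cancellation that your plan throws away too early: Lemma \ref{LM42} delivers not just $\mathbb E|e_k|^2+l_1\Delta\mathbb E|G_k|^2\le C_T\Delta$ but also the \emph{negative} tail sums $-(b_1-l_1a_2-\epsilon)\Delta\sum_{i=k-M+1}^{k}\mathbb E|e_i|^2$ and $-(b_3-l_1a_4)\Delta\sum_{i=k-M+1}^{k}\mathbb E V(x(t_i),X_i)$. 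These are retained and propagated forward; at each block boundary they sit against the positive $b_2$- and $l_1\epsilon_2$-initial-segment sums from the contraction lemma, and since Assumption \ref{as1} gives $\epsilon_2|g_2(x)-g_2(y)|^2\le a_3|x-y|^2+a_5V(x,y)$ while $\lambda_2$ is chosen so that $b_1-l_1a_2-(\lambda_2\vee\epsilon_6)-(b_2+l_1a_3)e^{\lambda_2\tau}>0$ and $b_3-l_1a_4-l_1a_5e^{\lambda_2\tau}>0$, the net sign is nonpositive and these terms disappear entirely. So the bookkeeping you want is to carry the full right-hand side of Lemma \ref{LM42} (including its negative sums) through the iteration, not to isolate $\phi$ and then attempt a separate $V$-induction; no higher-moment extension of Lemma \ref{LM57} is needed.
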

\begin{proof}
    First, fixed $T=2\log2/\lambda_2+1$. Then, for any $t_k\in [0,T]$, it follows from Lemma \ref{LM42} that
    \begin{align}\label{eq--4.8}
     \notag   \mathbb{E}&\Big(|x(t_k;0,\xi)-X_k^{0,\xi}|^2+\frac{1\Delta}{1+\lambda_2 \Delta} |g(x(t_k;0,\xi),x(t_k-\tau;0,\xi))-g(X_{k}^{0,\xi},X_{k-M}^{0,\xi})|^2\Big) \\ \notag
     &\leq CT\Delta\big(1+\sup \limits_{t\geq -\tau}\mathbb{E}(|x(t;0,\xi)|^{4q-2})\big)  -(b_1-l_1a_2-\epsilon _6)\Delta \sum \limits_{i=k-M+1}^{k}\mathbb{E}(|x(t_i;0,\xi)-X_i^{0,\xi}|^2)
   \\
  &\quad~ -(b_3-l_1a_4)\Delta \sum \limits_{i=k-M+1}^{k}\mathbb{E}\big(V(x(t_{i};0,\xi), X_{i}^{0,\xi})\big).
    \end{align}
    By the Young inequality, we can see that for any $t_k \in [T,2T]$
    \begin{align}\label{eq-4.9}
        \notag   \mathbb{E}&\Big(|x(t_k;0,\xi)-X_k^{0,\xi}|^2+\frac{1\Delta}{1+\lambda_2 \Delta} |g(x(t_k;0,\xi),x(t_k-\tau;0,\xi))-g(X_{k}^{0,\xi},X_{k-M}^{0,\xi})|^2\Big) \\ \notag
        &\leq 2\mathbb{E}\Big(|x(t_k;0,\xi)-X_k^{N,x_{T}^{0,\xi}}|^2+\frac{1\Delta}{1+\lambda_2 \Delta} |g(x(t_k;0,\xi),x(t_k-\tau;0,\xi))-g(X_k^{N,x_{T}^{0,\xi}},X_{k-M}^{N,x_{T}^{0,\xi}})|^2\Big) \\ \notag
        &\quad~+2\mathbb{E}\Big(|X_k^{N,x_{T}^{0,\xi}}-X_k^{0,\xi}|^2+\frac{1\Delta}{1+\lambda_2 \Delta} |g(X_k^{N,x_{T}^{0,\xi}},X_{k-M}^{N,x_{T}^{0,\xi}})|^2-g(X_{k}^{0,\xi},X_{k-M}^{0,\xi})|^2\Big) \\ \notag
        &= 2\mathbb{E}\Big(|x(t_k;T,x_T^{0,\xi})-X_k^{N,x_{T}^{0,\xi}}|^2+\frac{1\Delta}{1+\lambda_2 \Delta} |g(x(t_k;T,x_T^{0,\xi}),x(t_k-\tau;T,x_T^{0,\xi}))-g(X_k^{N,x_{T}^{0,\xi}},X_{k-M}^{N,x_{T}^{0,\xi}})|^2\Big) \\ \notag
        &\quad~+2\mathbb{E}\Big(|X_k^{N,x_{T}^{0,\xi}}-X_k^{N,X_N^{0,\xi}}|^2+\frac{1\Delta}{1+\lambda_2 \Delta} |g(X_k^{N,x_{T}^{0,\xi}},X_{k-M}^{N,x_{T}^{0,\xi}})|^2-g(X_{k}^{N,X_N^{0,\xi}},X_{k-M}^{N,X_N^{0,\xi}})|^2\Big) \\ \notag
        &\leq 2\Bigg(CT\Delta\big(1+\sup \limits_{t\geq -\tau}\mathbb{E}(|x(t;T,x_{T}^{0,\xi})|^{4q-2})\big)  -(b_1-l_1a_2-\epsilon _6)\Delta \sum \limits_{i=k-M+1}^{k}\mathbb{E}(|x(t_i;T,x_{T}^{0,\xi})-X_i^{N,x_{T}^{0,\xi}}|^2)
   \\ \notag
  &\quad~ -(b_3-l_1a_4)\Delta \sum \limits_{i=k-M+1}^{k}\mathbb{E}\big(V(x(t_{i};T,x_{T}^{0,\xi}), X_{i}^{N,x_{T}^{0,\xi}})\big) \Bigg)+2\Bigg(\mathbb{E}\Big(|x(T;0,\xi)-X_{N}^{0,\xi}|^2 \\ \notag
  &\quad~+\frac{l_1\Delta}{1+\lambda_2 \Delta}|g(x(T;0,\xi),x(T-\tau;0,\xi))-g(X_{N}^{0,\xi},X_{N-M}^{0,\xi})|^2\Big)\Big(1+\lambda_2 \Delta \Big)^{-(k-N)} \\ \notag
  &\quad~-(b_1-l_1a_2-\lambda_2)\Delta \sum \limits_{i=k-M+1}^{k}(1+\lambda_2\Delta)^{i-k-1}\mathbb{E}(|X_{i}^{N,x_{T}^{0,\xi}}-X_{i}^{N,X_N^{0,\xi}}|^2) \\ \notag
   &\quad~+b_2(1+\lambda_2 \Delta)^M \Delta \sum \limits_{i=N-M+1}^{N}(1+\lambda_2\Delta)^{i-k-1}\mathbb{E}(|X_{i}^{N,x_{T}^{0,\xi}}-X_{i}^{N,X_N^{0,\xi}}|^2) \\ \notag
    &\quad~-(b_3-l_1a_4)\Delta \sum \limits_{i=k-M+1}^{k}(1+\lambda_2\Delta)^{i-k-1}\mathbb{E}\big(V(X_{i}^{N,x_{T}^{0,\xi}},X_{i}^{N,X_N^{0,\xi}})\big)\\
    &\quad~+l_1\epsilon_2(1+\lambda_2 \Delta)^M \Delta  \sum \limits_{i=N-M+1}^{N}(1+\lambda_2\Delta)^{i-k-1}\mathbb{E}\big(\big|g_2(X_{i}^{N,x_{T}^{0,\xi}})-g_2(X_{i}^{N,X_N^{0,\xi}})\big|^2\big)\Bigg)
    \end{align}
    From \eqref{eq--4.8}, \eqref{eq-4.9} and Assumption \ref{as1}, we note that
    \begin{align*}
     \notag   \Bigg(&-(b_1-l_1a_2-\epsilon _6)\Delta \sum \limits_{i=N-M+1}^{N}\mathbb{E}(|x(t_i;0,\xi)-X_i^{0,\xi}|^2)
  -(b_3-l_1a_4)\Delta \sum \limits_{i=N-M+1}^{N}\mathbb{E}\big(V(x(t_{i};0,\xi), X_{i}^{0,\xi})\big)\\ \notag
  &\quad +b_2(1+\lambda_2 \Delta)^M \Delta \sum \limits_{i=N-M+1}^{N}(1+\lambda_2\Delta)^{i-k-1}\mathbb{E}(|X_{i}^{N,x_{T}^{0,\xi}}-X_{i}^{N,X_N^{0,\xi}}|^2)\\ \notag
  &\quad+l_1\epsilon_2(1+\lambda_2 \Delta)^M \Delta  \sum \limits_{i=N-M+1}^{N}\mathbb{E}\big(\big|g_2(X_{i}^{N,x_{T}^{0,\xi}})-g_2(X_{i}^{N,X_N^{0,\xi}})\big|^2\big)\Bigg)\Big(1+\lambda_2 \Delta \Big)^{-(k-N)} \\ \notag
&=\Bigg(-(b_1-l_1a_2-\epsilon _6-(b_2+l_1a_3)(1+\lambda_2 \Delta)^M)\Delta \sum \limits_{i=N-M+1}^{N}\mathbb{E}(|x(t_i;0,\xi)-X_i^{0,\xi}|^2) \\ \notag
 &~\quad\quad -(b_3-l_1a_4-l_1a_5(1+\lambda_2 \Delta)^M)\Delta \sum \limits_{i=N-M+1}^{N}\mathbb{E}\big(V(x(t_{i};0,\xi), X_{i}^{0,\xi})\big)\Bigg) \\ \notag
 &\leq \Bigg(-(b_1-l_1a_2-(\epsilon _6\vee \lambda_2)-(b_2+l_1a_3)e^{\lambda_2 \tau})\Delta \sum \limits_{i=N-M+1}^{N}\mathbb{E}(|x(t_i;0,\xi)-X_i^{0,\xi}|^2) \\
 &~\quad\quad -(b_3-l_1a_4-l_1a_5e^{\lambda_2 \tau})\Delta \sum \limits_{i=N-M+1}^{N}\mathbb{E}\big(V(x(t_{i};0,\xi), X_{i}^{0,\xi})\big)\Bigg) \leq 0.
    \end{align*}
    Thus, we have
    \begin{align*}
          \notag   \mathbb{E}&\Big(|x(t_k;0,\xi)-X_k^{0,\xi}|^2+\frac{1\Delta}{1+\lambda_2 \Delta} |g(x(t_k;0,\xi),x(t_k-\tau;0,\xi))-g(X_{k}^{0,\xi},X_{k-M}^{0,\xi})|^2\Big) \\ \notag
          &\leq CT\Delta\big(1+\sup \limits_{t\geq -\tau}\mathbb{E}(|x(t;0,\xi)|^{4q-2})\big) \big(2+2(1+\lambda_2 \Delta)^{-(k-N)}\big)\\ \notag
          &\quad~+2\Bigg( -(b_1-l_1a_2-\epsilon _6)\Delta \sum \limits_{i=k-M+1}^{k}\mathbb{E}(|x(t_i;T,x_{T}^{0,\xi})-X_i^{N,x_{T}^{0,\xi}}|^2)
   \\ \notag
  &\quad~ -(b_3-l_1a_4)\Delta \sum \limits_{i=k-M+1}^{k}\mathbb{E}\big(V(x(t_{i};T,x_{T}^{0,\xi}), X_{i}^{N,x_{T}^{0,\xi}})\big) \\ \notag
  &\quad~-(b_1-l_1a_2-\lambda_2)\Delta \sum \limits_{i=k-M+1}^{k}(1+\lambda_2\Delta)^{i-k-1}\mathbb{E}(|X_{i}^{N,x_{T}^{0,\xi}}-X_{i}^{N,X_N^{0,\xi}}|^2) \\ \notag
   &\quad~-(b_3-l_1a_4)\Delta \sum \limits_{i=k-M+1}^{k}(1+\lambda_2\Delta)^{i-k-1}\mathbb{E}\big(V(X_{i}^{N,x_{T}^{0,\xi}},X_{i}^{N,X_N^{0,\xi}})\big)\Bigg).
    \end{align*}
    Similarly, for any $t_k \in [2T,3T]$, we have
     \begin{align}\label{eq-4.12}
        \notag   \mathbb{E}&\Big(|x(t_k;0,\xi)-X_k^{0,\xi}|^2+\frac{1\Delta}{1+\lambda_2 \Delta} |g(x(t_k;0,\xi),x(t_k-\tau;0,\xi))-g(X_{k}^{0,\xi},X_{k-M}^{0,\xi})|^2\Big) \\ \notag
        &\leq 2\Bigg(CT\Delta\big(1+\sup \limits_{t\geq -\tau}\mathbb{E}(|x(t;2T,x_{2T}^{0,\xi})|^{4q-2})\big)  -(b_1-l_1a_2-\epsilon _6)\Delta \sum \limits_{i=k-M+1}^{k}\mathbb{E}(|x(t_i;2T,x_{2T}^{0,\xi})-X_i^{2N,x_{2T}^{0,\xi}}|^2)
   \\ \notag
  &\quad~ -(b_3-l_1a_4)\Delta \sum \limits_{i=k-M+1}^{k}\mathbb{E}\big(V(x(t_{i};2T,x_{2T}^{0,\xi}), X_{i}^{2N,x_{2T}^{0,\xi}})\big) \Bigg)+2\Bigg(\mathbb{E}\Big(|x(2T;0,\xi)-X_{2N}^{0,\xi}|^2 \\ \notag
  &\quad~+\frac{l_1\Delta}{1+\lambda_2 \Delta}|g(x(2T;0,\xi),x(2T-\tau;0,\xi))-g(X_{2N}^{0,\xi},X_{2N-M}^{0,\xi})|^2\Big)\Big(1+\lambda_2 \Delta \Big)^{-(k-2N)} \\ \notag
  &\quad~-(b_1-l_1a_2-\lambda_2)\Delta \sum \limits_{i=k-M+1}^{k}(1+\lambda_2\Delta)^{i-k-1}\mathbb{E}(|X_{i}^{2N,x_{2T}^{0,\xi}}-X_{i}^{2N,X_{2N}^{0,\xi}}|^2) \\ \notag
   &\quad~+b_2(1+\lambda_2 \Delta)^M \Delta \sum \limits_{i=2N-M+1}^{2N}(1+\lambda_2\Delta)^{i-k-1}\mathbb{E}(|X_{i}^{2N,x_{2T}^{0,\xi}}-X_{i}^{2N,X_{2N}^{0,\xi}}|^2) \\ \notag
    &\quad~-(b_3-l_1a_4)\Delta \sum \limits_{i=k-M+1}^{k}(1+\lambda_2\Delta)^{i-k-1}\mathbb{E}\big(V(X_{i}^{2N,x_{2T}^{0,\xi}},X_{i}^{2N,X_{2N}^{0,\xi}})\big)\\
    &\quad~+l_1\epsilon_2(1+\lambda_2 \Delta)^M \Delta  \sum \limits_{i=2N-M+1}^{2N}(1+\lambda_2\Delta)^{i-k-1}\mathbb{E}\big(\big|g_2(X_{i}^{2N,x_{2T}^{0,\xi}})-g_2(X_{i}^{2N,X_{2N}^{0,\xi}})\big|^2\big)\Bigg).
    \end{align}
  By Assumption \ref{as1}, we note that
  \begin{align}\label{eq-4.13}
   \notag  \Bigg(& l_1\epsilon_2(1+\lambda_2 \Delta)^M \Delta  \sum \limits_{i=2N-M+1}^{2N}\mathbb{E}\big((1+\lambda_2\Delta)^{i-2N-1}\big|g_2(X_{i}^{2N,x_{2T}^{0,\xi}})-g_2(X_{i}^{2N,X_{2N}^{0,\xi}})\big|^2\big) \\ \notag
     & -2\Big((b_3-l_1a_4)\Delta \sum \limits_{i=2N-M+1}^{2N}\mathbb{E}\big(V(x(t_{i};T,x_{T}^{0,\xi}), X_{i}^{N,x_{T}^{0,\xi}})\big)\\ \notag
     &+(b_3-l_1a_4)\Delta \sum \limits_{i=2N-M+1}^{2N}(1+\lambda_2\Delta)^{i-2N-1}\mathbb{E}\big(V(X_{i}^{N,x_{T}^{0,\xi}},X_{i}^{N,X_N^{0,\xi}})\big)\Big)\Bigg) \\ \notag
     &\leq \Bigg(2 l_1\epsilon_2(1+\lambda_2 \Delta)^M \Delta  \sum \limits_{i=2N-M+1}^{2N}(1+\lambda_2\Delta)^{i-2N-1}\Big(\mathbb{E}\big(\big|g_2(X_{i}^{2N,x_{2T}^{0,\xi}})-g_2(X_{i}^{N,x_{T}^{0,\xi}})\big|^2\big)\\ \notag
     & \quad\quad+\mathbb{E}\big(\big|g_2(X_{i}^{N,x_{T}^{0,\xi}})-g_2(X_{i}^{2N,X_{2N}^{0,\xi}})\big|^2\big) \Big)-2\Big((b_3-l_1a_4)\Delta \sum \limits_{i=2N-M+1}^{2N}\mathbb{E}\big(V(x(t_{i};T,x_{T}^{0,\xi}), X_{i}^{N,x_{T}^{0,\xi}})\big)\\ \notag
     &\quad\quad+(b_3-l_1a_4)\Delta \sum \limits_{i=2N-M+1}^{2N}(1+\lambda_2\Delta)^{i-2N-1}\mathbb{E}\big(V(X_{i}^{N,x_{T}^{0,\xi}},X_{i}^{N,X_N^{0,\xi}})\big)\Big)\Bigg) \\ \notag
     &\leq -2\Big((b_3-l_1a_4-l_1a_5(1+\lambda_2 \Delta)^M )\Delta \sum \limits_{i=2N-M+1}^{2N}\mathbb{E}\big(V(x(t_{i};T,x_{T}^{0,\xi}), X_{i}^{N,x_{T}^{0,\xi}})\big)\\ \notag
     &\quad\quad+(b_3-l_1a_4-l_1a_5(1+\lambda_2 \Delta)^M )\Delta \sum \limits_{i=2N-M+1}^{2N}(1+\lambda_2\Delta)^{i-2N-1}\mathbb{E}\big(V(X_{i}^{N,x_{T}^{0,\xi}},X_{i}^{N,X_N^{0,\xi}})\big)\Big) \\ \notag
     &\quad\quad+2l_1a_3(1+\lambda_2 \Delta)^M \Delta  \sum \limits_{i=2N-M+1}^{2N}(1+\lambda_2\Delta)^{i-2N-1}\Big(\mathbb{E}\big(\big|X_{i}^{2N,x_{2T}^{0,\xi}}-X_{i}^{N,x_{T}^{0,\xi}}\big|^2\big)\\ \notag
     &\quad\quad+\mathbb{E}\big(\big|X_{i}^{N,x_{T}^{0,\xi}}-X_{i}^{2N,X_{2N}^{0,\xi}}\big|^2\big) \Big) \\
     &\quad\leq2 l_1a_3(1+\lambda_2 \Delta)^M \Delta  \sum \limits_{i=2N-M+1}^{2N}(1+\lambda_2\Delta)^{i-2N-1}\Big(\mathbb{E}\big(\big|X_{i}^{2N,x_{2T}^{0,\xi}}-X_{i}^{N,x_{T}^{0,\xi}}\big|^2\big)+\mathbb{E}\big(\big|X_{i}^{N,x_{T}^{0,\xi}}-X_{i}^{2N,X_{2N}^{0,\xi}}\big|^2\big) \Big)
  \end{align}
  and
  \begin{align}\label{eq-4.14}
   \notag    &b_2(1+\lambda_2 \Delta)^M \Delta \sum \limits_{i=2N-M+1}^{2N}(1+\lambda_2\Delta)^{i-k-1}\mathbb{E}(|X_{i}^{2N,x_{2T}^{0,\xi}}-X_{i}^{2N,X_{2N}^{0,\xi}}|^2)\\ \notag
      &\quad+2l_1a_3(1+\lambda_2 \Delta)^M \Delta  \sum \limits_{i=2N-M+1}^{2N}(1+\lambda_2\Delta)^{i-2N-1}\Big(\mathbb{E}\big(\big|X_{i}^{2N,x_{2T}^{0,\xi}}-X_{i}^{N,x_{T}^{0,\xi}}\big|^2\big)+\mathbb{E}\big(\big|X_{i}^{N,x_{T}^{0,\xi}}-X_{i}^{2N,X_{2N}^{0,\xi}}\big|^2\big) \Big) \\ \notag
       &\quad~-2\Bigg((b_1-l_1a_2-\epsilon _6)\Delta \sum \limits_{i=2N-M+1}^{2N}\mathbb{E}(|x(t_i;T,x_{T}^{0,\xi})-X_i^{N,x_{T}^{0,\xi}}|^2)
   \\ \notag
  &\quad~+(b_1-l_1a_2-\lambda_2)\Delta \sum \limits_{i=2N-M+1}^{2N}(1+\lambda_2\Delta)^{i-k-1}\mathbb{E}(|X_{i}^{N,x_{T}^{0,\xi}}-X_{i}^{N,X_N^{0,\xi}}|^2) \Bigg)\\ \notag
  &\quad\quad\leq -2\Bigg((b_1-l_1a_2-\epsilon _6-(b_2+l_1a_3)(1+\lambda_2 \Delta)^M)\Delta \sum \limits_{i=2N-M+1}^{2N}\mathbb{E}(|x(t_i;T,x_{T}^{0,\xi})-X_i^{N,x_{T}^{0,\xi}}|^2)
   \\
  &\quad\quad~~+(b_1-l_1a_2-\lambda_2-(b_2+l_1a_3)(1+\lambda_2 \Delta)^M)\Delta \sum \limits_{i=2N-M+1}^{2N}(1+\lambda_2\Delta)^{i-k-1}\mathbb{E}(|X_{i}^{N,x_{T}^{0,\xi}}-X_{i}^{N,X_N^{0,\xi}}|^2) \Bigg)  \leq 0.
  \end{align}
  Thus, it follows from \eqref{eq-4.12}, \eqref{eq-4.13} and \eqref{eq-4.14} that for any $t_k\in [2T,3T]$
   \begin{align*}
          \notag   \mathbb{E}&\Big(|x(t_k;0,\xi)-X_k^{0,\xi}|^2+\frac{1\Delta}{1+\lambda_2 \Delta} |g(x(t_k;0,\xi),x(t_k-\tau;0,\xi))-g(X_{k}^{0,\xi},X_{k-M}^{0,\xi})|^2\Big) \\ \notag
          &\leq CT\Delta\big(1+\sup \limits_{t\geq -\tau}\mathbb{E}(|x(t;0,\xi)|^{4q-2})\big) \big(2+2^2(1+\lambda_2 \Delta)^{-(k-2N)}++2^3(1+\lambda_2 \Delta)^{-(k-N)}\big)\\ \notag
          &\quad~+2\Bigg( -(b_1-l_1a_2-\epsilon _6)\Delta \sum \limits_{i=k-M+1}^{k}\mathbb{E}(|x(t_i;2T,x_{2T}^{0,\xi})-X_i^{2N,x_{2T}^{0,\xi}}|^2)
   \\ \notag
  &\quad~ -(b_3-l_1a_4)\Delta \sum \limits_{i=k-M+1}^{k}\mathbb{E}\big(V(x(t_{i};2T,x_{2T}^{0,\xi}), X_{i}^{2N,x_{2T}^{0,\xi}})\big) \\ \notag
  &\quad~-(b_1-l_1a_2-\lambda_2)\Delta \sum \limits_{i=k-M+1}^{k}(1+\lambda_2\Delta)^{i-k-1}\mathbb{E}(|X_{i}^{2N,x_{2T}^{0,\xi}}-X_{i}^{2N,X_{2N}^{0,\xi}}|^2) \\ \notag
   &\quad~-(b_3-l_1a_4)\Delta \sum \limits_{i=k-M+1}^{k}(1+\lambda_2\Delta)^{i-k-1}\mathbb{E}\big(V(X_{i}^{2N,x_{2T}^{0,\xi}},X_{i}^{2N,X_{2N}^{0,\xi}})\big)\Bigg).
    \end{align*}
Continue this process and it is not difficult to see that for any $t_k \in [jT,(j+1)T]$ with $j\in \mathbb{N}$
\begin{align*}
      \notag   \mathbb{E}&\Big(|x(t_k;0,\xi)-X_k^{0,\xi}|^2+\frac{1\Delta}{1+\lambda_2 \Delta} |g(x(t_k;0,\xi),x(t_k-\tau;0,\xi))-g(X_{k}^{0,\xi},X_{k-M}^{0,\xi})|^2\Big) \\ \notag
          &\leq 2Ce^{\lambda_2 T}T\Delta\big(1+\sup \limits_{t\geq -\tau}\mathbb{E}(|x(t;0,\xi)|^{4q-2})\big)\sum\limits_{i=0}^{j} \Big(2(1+\lambda_2 \Delta)^{-N}\Big)^i \\ \notag
          &\quad~+2\Bigg( -(b_1-l_1a_2-\epsilon _6)\Delta \sum \limits_{i=k-M+1}^{k}\mathbb{E}(|x(t_i;2T,x_{2T}^{0,\xi})-X_i^{2N,x_{2T}^{0,\xi}}|^2)
   \\ \notag
  &\quad~ -(b_3-l_1a_4)\Delta \sum \limits_{i=k-M+1}^{k}\mathbb{E}\big(V(x(t_{i};2T,x_{2T}^{0,\xi}), X_{i}^{2N,x_{2T}^{0,\xi}})\big) \\ \notag
  &\quad~-(b_1-l_1a_2-\lambda_2)\Delta \sum \limits_{i=k-M+1}^{k}(1+\lambda_2\Delta)^{i-k-1}\mathbb{E}(|X_{i}^{2N,x_{2T}^{0,\xi}}-X_{i}^{2N,X_{2N}^{0,\xi}}|^2) \\ \notag
   &\quad~-(b_3-l_1a_4)\Delta \sum \limits_{i=k-M+1}^{k}(1+\lambda_2\Delta)^{i-k-1}\mathbb{E}\big(V(X_{i}^{2N,x_{2T}^{0,\xi}},X_{i}^{2N,X_{2N}^{0,\xi}})\big)\Bigg).
\end{align*}
If necessary, choose a sufficiently small $\Delta$ such that $(1+\lambda_2 \Delta)^{-N} \leq e^{-\frac{\lambda_2 T}{2}}$. Note that $\hat{\theta}:=2e^{-\frac{\lambda_2 T}{2}} <1$, thus, $\sum_{i=0}^{\infty}(2(1+\lambda_2 \Delta)^{-N})^i \leq 1/(1-\hat{\theta})$. Then, we have
\begin{align*}
   \notag   \mathbb{E}&\Big(|x(t_k;0,\xi)-X_k^{0,\xi}|^2+\frac{1\Delta}{1+\lambda_2 \Delta} |g(x(t_k;0,\xi),x(t_k-\tau;0,\xi))-g(X_{k}^{0,\xi},X_{k-M}^{0,\xi})|^2\Big) \\ \notag
    &\leq \frac{ 2Ce^{\lambda_2 T}T}{1-\hat{\theta}}\big(1+\sup \limits_{t\geq -\tau}\mathbb{E}(|x(t;0,\xi)|^{4q-2})\big) \Delta.
\end{align*}
By the arbitrariness of $j$ and Lemma 1, the desired result follows.
\end{proof}

Based on the above lemmas, we take a further step to analyze the uniform boundedness in probability of the norm of the numerical segment process.
\begin{lemma}\label{LM510}
     Suppose Assumptions \ref{as1} and \ref{as2} hold. Then for any $T \geq 0$ and $ \epsilon_7>0$, there exists a positive constant $H$ such that for any $k \in \mathbb{N}$ and $\xi \in B(R)$
     \begin{align*}
         \mathbb{P}\{\|X_{t_k}^{0,\xi}\|\leq  H, \forall s \in [t_k,t_k+T]\}  \geq 1-\epsilon_7.
     \end{align*}
\end{lemma}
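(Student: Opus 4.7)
The plan is to combine the boundedness in probability of the underlying solution on shifted windows (Lemma \ref{LM3.1}) with the uniform-in-time strong convergence established in Theorem \ref{T--2}, via a triangle-inequality argument. Since the continuous numerical approximation $X(\cdot)$ is piecewise linear on the grid, the quantity $\sup_{s\in [t_k,t_k+T]}\|X_s^{0,\xi}\|$ coincides with $\max_{j}|X_j^{0,\xi}|$ where $j$ ranges over the (finitely many) grid indices with $t_j\in[t_k-\tau,t_k+T]$. So it suffices to estimate a maximum over roughly $(T+\tau)/\Delta+1$ discrete time points.

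First, I would apply Lemma \ref{LM3.1} with the window $[t_k-\tau,t_k+T]$ (of total length $T+\tau$). This produces a constant $\hat{H}=\hat{H}(R,T+\tau,\epsilon_7/2)$, independent of $k$, such that
\begin{align*}
\mathbb{P}\Big\{|x(s;0,\xi)|\leq\hat{H},\;\forall s\in[t_k-\tau,t_k+T]\Big\}\geq 1-\frac{\epsilon_7}{2}.
\end{align*}
Next, by the triangle inequality, $|X_j^{0,\xi}|\leq |x(t_j;0,\xi)|+|x(t_j;0,\xi)-X_j^{0,\xi}|$. Theorem \ref{T--2} together with Chebyshev's inequality yields, for each single grid index $j$, the pointwise bound $\mathbb{P}\{|x(t_j;0,\xi)-X_j^{0,\xi}|\geq M\}\leq C\Delta/M^2$. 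A union bound over the $\lceil(T+\tau)/\Delta\rceil+1$ relevant grid indices then gives
\begin{align*}
\mathbb{P}\Big\{\max_{j:\,t_j\in[t_k-\tau,t_k+T]} |x(t_j;0,\xi)-X_j^{0,\xi}|\geq M\Big\}\leq\frac{C(T+\tau+\Delta)}{M^2}.
\end{align*}
Choosing $M=M(T,\tau,\epsilon_7)$ large enough so that the right-hand side is at most $\epsilon_7/2$, setting $H:=\hat{H}+M$, and combining via one further union bound would deliver the claim.

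The key observation making the argument work uniformly in $k$ is that both Lemma \ref{LM3.1} and Theorem \ref{T--2} deliver estimates that are uniform in the base time: Lemma \ref{LM3.1} provides the same $\hat{H}$ for every shifted window of length $T+\tau$, while Theorem \ref{T--2} provides a bound on $\mathbb{E}|x(t_j;0,\xi)-X_j^{0,\xi}|^2$ with a constant independent of $j$. The only mild subtlety is that the union-bound threshold $C(T+\tau+\Delta)/M^2$ depends on the length of the estimation window but not on the offset $k$, so $M$ (and hence $H$) can be chosen once and for all. No further sample-path modulus-of-continuity argument is required, because piecewise linearity of $X(\cdot)$ reduces the supremum over the continuous interval to a finite maximum over grid points.
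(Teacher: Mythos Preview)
Your argument is correct, and it takes a genuinely different route from the paper. The paper proceeds \emph{directly} on the numerical scheme: it reruns the moment-bound computation of Lemma~\ref{LM57} with a stopping time $\overline{\lambda}_k^{\Lambda}=\inf\{i\geq k:|X_i|>\Lambda\}$ to obtain a uniform bound on $\mathbb{E}|X_{(k+N)\wedge\overline{\lambda}_k^{\Lambda}}|^2$, and then Chebyshev gives $\mathbb{P}\{\overline{\lambda}_k^{K_1}\leq k+N\}\leq\epsilon_7/2$; this never touches the underlying solution except trivially on $[-\tau,0]$. Your approach instead \emph{transfers} the bound from $x(\cdot)$ to $X(\cdot)$: Lemma~\ref{LM3.1} controls the true solution on the shifted window, and Theorem~\ref{T--2} plus a union bound over the $\approx(T+\tau)/\Delta$ grid points controls the discrepancy, the factor $1/\Delta$ in the number of points being exactly cancelled by the $C\Delta$ in the mean-square error. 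The piecewise-linearity reduction to a finite maximum is legitimate because $|X_j+\theta(X_{j+1}-X_j)|^2$ is convex in $\theta$.

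Two remarks. First, your route imports Assumption~\ref{as3} through Theorem~\ref{T--2}, whereas the lemma as stated (and the paper's proof) only needs Assumptions~\ref{as1} and~\ref{as2}; this is harmless for the paper's later applications but does make your proof formally weaker. Second, you should make explicit that the constants $\hat H$ in Lemma~\ref{LM3.1} and $C$ in Theorem~\ref{T--2} can be taken uniform over $\xi\in B(R)$ (both follow from the uniform moment bounds of the underlying solution), so that the resulting $H$ is indeed independent of both $k$ and $\xi$. The trade-off: the paper's argument is self-contained and hypothesis-minimal; yours is shorter and reuses the heavy lifting already done in Theorem~\ref{T--2}.
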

\begin{proof}
   For any $k,i \in \mathbb{N}$, define
     \begin{align*}
     \overline{\lambda}_k^{\Lambda}=\inf\{i\geq k: |X_{i}>\Lambda|\}.
    \end{align*}
      Then, similar to  Lemma \ref{LM57}, for any $k\in\mathbb{N}$ and $T>0$, we have
\begin{align}\label{eq-4.27}
 \notag  \mathbb{E}&\Big(|X_{(k+N) \wedge \overline{\lambda}_k^{\Lambda}}|^2 +\frac{l_2\Delta}{1+\lambda_1 \Delta}|g(X_{(k+N) \wedge \overline{\lambda}_k^{\Lambda}}, X_{((k+N) \wedge \overline{\lambda}_k^{\Lambda})-M})|^2 \Big)\\ \notag
   &\leq \mathbb{E}\Big(|X_k|^2+\frac{l_2\Delta}{1+\lambda_1 \Delta}|g(X_k,X_{k-M})|^2\Big) \Big(1+\lambda_1 \Delta\Big)^{-(N\wedge (\overline{\lambda}_{k}^{\Lambda}-k))}\\ \notag
   &\quad~+b_9\Delta \mathbb{E}\Big(\sum \limits_{j=k+1}^{(k+N) \wedge \overline{\lambda}_k^{\Lambda}}(1+\lambda_1 \Delta)^{-((k+N) \wedge \overline{\lambda}_k^{\Lambda}+1-j)}\Big) \\ \notag
    &\quad~+b_{11}\Delta\mathbb{E}\Big( \sum\limits_{j=k-M+1}^{k}(1+\lambda_1 \Delta)^{j-((k+N) \wedge \overline{\lambda}_k^{\Lambda}+1)}|X_j|^{2}\Big) \\ \notag
   &\quad~+b_{13}\Delta\mathbb{E}\Big( \sum\limits_{j=k-M+1}^{k}(1+\lambda_1 \Delta)^{j-((k+N) \wedge \overline{\lambda}_k^{\Lambda}+1)}|X_j|^{q+1}\Big) \\ \notag
   &\leq C\Big(1+ \sup \limits_{i\geq -M} \mathbb{E}\big(|X_i|^2+\frac{l_2\Delta}{1+\lambda_1 \Delta}|g(X_i,X_{i-M})|^2\big)+ \sup\limits_{k\geq0}\Delta \sum\limits_{j=k-M+1}^{k} \mathbb{E}(|X_j|^{q+1}) \Big)e^{\lambda T} \\
   &\leq Re^{\lambda T},
\end{align}
where $R$ is a constant that depends on $\xi$. For any $\epsilon _7 >0$, choose a positive constant $K_1$ large enough such that
\begin{align}\label{eq-4.28}
    Re^{\lambda T } \leq \frac{\epsilon _7 K_1^2}{2}.
\end{align}
By the Chebyshev inequality, it follows from \eqref{eq-4.27} and \eqref{eq-4.28} that
\begin{align*}
    \mathbb{P}\big\{\overline{\lambda}_k^{K_1} &\leq k+N\big\}\leq \mathbb{P}\big\{X_{(k+N)\wedge  \overline{\lambda}_k^{K_1}} >K_1\big\} \\ \notag
    &\leq \frac{\mathbb{E}\big(|X_{(k+N)\wedge  \overline{\lambda}_k^{K_1}} |^2\big)}{K_1^2} \\ \notag
    &\leq \frac{Re^{\lambda T}}{K_1^2}\leq \frac{\epsilon _7}{2},
\end{align*}
which implies that
\begin{align}\label{eq-4.40}
    \mathbb{P}\big\{X(t;0,\xi)\geq K_1, \forall t \in [t_k,t_k+T]\big\} \leq \frac{\epsilon _7}{2}
\end{align}
for any $t \geq 0$. By Lemma \ref{LM3.1}, we know that for any $\epsilon _7 >0$, there exists a constant $K_2$ such that
\begin{align}\label{eq-4.41}
    \mathbb{P}\big\{X(t;0,\xi)\geq K_2, \forall t \in [-\tau,0]\big\} \leq \frac{\epsilon _7}{2}.
\end{align}
Setting $H=K_1 \vee K_2$, and combining \eqref{eq-4.40} and \eqref{eq-4.41} together yields
\begin{align}\label{eq-4.42}
      \mathbb{P}\big\{X(t;0,\xi)\leq H, \forall t \in [t_k,t_k+T]\big\} \geq 1-\epsilon _7
\end{align}
for any $t \geq -\tau$. Furthermore, we have
\begin{align*}
    \mathbb{P}\big\{\|X_{t}^{0,\xi}\|\leq H, \forall t \in [t_k,t_k+T]\big\} \leq \mathbb{P}\big\{|X(t;0,\xi)| \leq H, \forall t \in [t_k,t_k+T]\big\},
\end{align*}
which implies the desired result by combining with \eqref{eq-4.42}.
\end{proof}

Before we prove the uniform convergence in probability of the norm of the numerical segment process, the following lemma is required.
\begin{lemma}\label{LM511}
    Suppose Assumptions \ref{as1} and \ref{as2} hold. Then for any $\xi \in B(R), \epsilon _8>0, \epsilon _9>0$, there exists a positive integer $j^*$ and $\Delta_3\in(0,\Delta_1\wedge \Delta_2]$ such that for any $\Delta \in (0,\Delta_3]$
    \begin{align*}
        \mathbb{P}\Big \{\sup \limits_{\substack{|s_1-s_2|\leq \tau/j^*\\
s_1,s_2\in [t_k,t_k+\tau]} } |X(s_1)-X(s_2)|\geq \epsilon _9\Big\} \leq \epsilon _8.
    \end{align*}
\end{lemma}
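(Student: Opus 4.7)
The plan is to decompose $X(s_2)-X(s_1)$ along the BEM grid, localize on a high-probability event where $|X|$ stays bounded (via Lemma \ref{LM510}), and control each piece by pathwise bounds, Brownian increment moments, or a chaining argument. This mirrors the proof of Lemma \ref{LM3.2} for the underlying solution, with Lemma \ref{LM510} playing the role of the boundedness-in-probability estimate and the martingale structure of the BEM diffusion increments providing the modulus of continuity.

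First I would apply Lemma \ref{LM510} with $T=\tau$ to choose $H=H(\epsilon_8)$ so that $\Omega^*:=\{\|X_s^{0,\xi}\|\leq H \text{ for all } s\in[t_k,t_k+\tau]\}$ has $\mathbb{P}(\Omega^*)\geq 1-\epsilon_8/3$; on $\Omega^*$, $|X(u)|\leq H$ for every $u\in[t_k-\tau,t_k+\tau]$, so Assumption \ref{as1} yields uniform bounds $|f(X_j,X_{j-M})|\leq C_H^f$ and $|g(X_j,X_{j-M})|^2\leq C_H^g$ at every grid index in the relevant range. For $s_1<s_2$ in $[t_k,t_k+\tau]$ with $s_2-s_1\leq\tau/j^*$ and $k_i:=\lfloor s_i/\Delta\rfloor$, the piecewise linear form of $X$ gives
$$|X(s_2)-X(s_1)|\leq |X_{k_1+1}-X_{k_1}|+|X_{k_2+1}-X_{k_2}|+|X_{k_2}-X_{k_1+1}|,$$
with the last term absent when $k_1=k_2$. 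The BEM recursion \eqref{DSDDE} implies $|X_{j+1}-X_j|\leq\Delta|f(X_{j+1},X_{j+1-M})|+|g(X_j,X_{j-M})\Delta W_j|$, and combining this with $\mathbb{E}|\Delta W_j|^4=3\Delta^2$ and a union bound over the $O(\tau/\Delta)$ increments shows that on $\Omega^*$ the two endpoint terms can be made smaller than $\epsilon_9/3$ with probability at least $1-\epsilon_8/3$ by shrinking $\Delta$.

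For the middle term I would split
$$X_{k_2}-X_{k_1+1}=\sum_{j=k_1+1}^{k_2-1}\Delta f(X_{j+1},X_{j+1-M})+\sum_{j=k_1+1}^{k_2-1}g(X_j,X_{j-M})\Delta W_j,$$
bound the drift piece on $\Omega^*$ by $(\tau/j^*)C_H^f$ (which is $<\epsilon_9/6$ once $j^*$ is large), and control the stochastic piece \emph{uniformly} in $(s_1,s_2)$. For this I introduce the discrete stopping time $\widetilde\sigma_H:=\inf\{j:|X_j|>H\}$ and the stopped martingale $M_j:=\sum_{i<j\wedge\widetilde\sigma_H}g(X_i,X_{i-M})\Delta W_i$, whose integrand is bounded by $\sqrt{C_H^g}$. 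Partitioning $[t_k,t_k+\tau]$ into $j^*$ subintervals $I_\ell$ of length $\tau/j^*$ with grid-aligned left endpoints $n_\ell$, a Lévy-type chaining (valid once $\Delta\leq\tau/j^*$, which gives the first condition on $\Delta_3$) yields on $\Omega^*$
$$\sup_{\substack{|s_1-s_2|\leq\tau/j^*\\ s_1,s_2\in[t_k,t_k+\tau]}}\Bigl|\sum_{j=k_1+1}^{k_2-1}g(X_j,X_{j-M})\Delta W_j\Bigr|\leq 3\max_\ell\max_{n_\ell\leq j\leq n_{\ell+1}}|M_j-M_{n_\ell}|.$$
Doob's $L^4$ maximal inequality combined with the discrete Burkholder--Davis--Gundy inequality gives $\mathbb{E}\max_{j\in I_\ell}|M_j-M_{n_\ell}|^4\leq C(\tau/j^*)^2(C_H^g)^2$, and Markov plus a union bound over the $j^*$ subintervals yields
$$\mathbb{P}\Bigl(\max_\ell\max_{j\in I_\ell}|M_j-M_{n_\ell}|\geq\epsilon_9/6\Bigr)\leq\frac{C\tau^2(C_H^g)^2}{j^*\,\epsilon_9^4},$$
which $\to 0$ as $j^*\to\infty$; enlarging $j^*$ makes this $\leq\epsilon_8/3$, and summing all failure probabilities gives the claim. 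The main obstacle is this last tail bound: a second-moment estimate via the Itô isometry would give a $j^*$-independent tail $C/\epsilon_9^2$, which is too weak, so the use of the \emph{fourth} moment through BDG is what produces the decisive $1/j^*$ factor, and the stopping-time localization via $\widetilde\sigma_H$ is what converts $g(X_j,X_{j-M})$ into a bounded integrand so that the BDG constant is explicit and finite.
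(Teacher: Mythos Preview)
Your proposal is correct and follows essentially the same strategy as the paper: localize via the stopping time from Lemma \ref{LM510}, partition $[t_k,t_k+\tau]$ into $j^*$ subintervals, use a fourth-moment (BDG-type) estimate on each subinterval to obtain an $L^4$ bound of order $(\tau/j^*)^2$, and then combine Markov's inequality with a union bound over the $j^*$ pieces to get the decisive $1/j^*$ tail. The only organizational difference is that the paper applies the three-term chaining directly to the stopped process $X(\cdot\wedge t_{\overline\lambda_k^{K_3}})$ and bounds $\mathbb{E}\sup_t|X(t\wedge\cdot)-X(t_{k+M}^{j^*,j}\wedge\cdot)|^4$ on each subinterval in one shot, whereas you first split off the two endpoint increments and handle the interior sum separately; both routes yield the same final estimate.
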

\begin{proof}
    First, it follows from Lemma \ref{LM510} that there exists positive constant $K_3$ large enough such that for any $k\geq -M$
    \begin{align}\label{eq-4.44}
        \mathbb{P}\big\{\overline{\lambda}_k^{K_3} < k+2M\big\} \leq \frac{\epsilon _8}{2}.
    \end{align}
     For any integer $j^*>0$, define $t_{k}^{j^*,j}=t_k+(j\tau)/j^*$, $j=0,1,...,j^*$. Choose $\Delta$ small enough such that $M=j^*\nu$ with some $\nu \in \mathbb{N}^+$. Then, it follows from \eqref{CDSDDE} and the Burkholder–Davis–Gundy inequality that for any $j\in\{0,...,j^*-1\}$
     \begin{align}\label{eq-4.45}
         \notag \mathbb{E}&\Big(\sup \limits_{t\in[t_{k+M}^{j^*,j},t_{k+M}^{j^*,j+1}]}\big(|X(t \wedge t_{\overline{\lambda}_k^{K_3}})-X(t_{k+M}^{j^*,j} \wedge t_{ \overline{\lambda}_k^{K_3}})|^4\big)\Big) \\ \notag
         &\leq  \mathbb{E}\Big(\sup \limits_{t\in[t_{k+M}^{j^*,j},t_{k+M}^{j^*,j+1}]}\big(\big|\sum\limits_{i=0}^{\lfloor(t-t_{k+M+j\nu})/\Delta)\rfloor-1}(X_{(k+M+j\nu+i+1)\wedge\overline{\lambda}_k^{K_3}}-X_{(k+M+j\nu+i)\wedge \overline{\lambda}_k^{K_3}})\\ \notag
         &\quad~+\frac{t-\lfloor t/\Delta\rfloor}{\Delta}(X_{(\lfloor t/\Delta \rfloor+1)\wedge \overline{\lambda}_k^{K_3} }-X_{(\lfloor t/\Delta \rfloor)\wedge \overline{\lambda}_k^{K_3} })\big|^4\big)\Big) \\ \notag\\ \notag
          &\leq 8\Delta^4 \mathbb{E}\Big(\big(\big|\sum\limits_{i=0}^{\nu-1}|f(X_{(k+M+j\nu+i+1)\wedge\overline{\lambda}_k^{K_3}}, X_{(k+M+j\nu+i+1)\wedge \overline{\lambda}_k^{K_3}-M})|\big|^4\big)\Big) \\ \notag
           &\quad~+ 8 \mathbb{E}\Big(\sup \limits_{t\in[t_{k+M}^{j^*,j},t_{k+M}^{j^*,j+1}]}\big(\big|\sum\limits_{i=0}^{\lfloor(t-t_{k+M+j\nu})/\Delta)\rfloor-1}g(X_{(k+M+j\nu+i)\wedge\overline{\lambda}_k^{K_3}},X_{(k+M+j\nu+i)\wedge \overline{\lambda}_k^{K_3}-M})\Delta W_{(k+M+j\nu+i)\wedge\overline{\lambda}_k^{K_3}}\\ \notag
         &\quad~+\frac{t-\lfloor t/\Delta\rfloor}{\Delta}g(X_{(\lfloor t/\Delta \rfloor)\wedge \overline{\lambda}_k^{K_3} }, X_{(\lfloor t/\Delta \rfloor)\wedge \overline{\lambda}_k^{K_3} -M})\Delta W_{(\lfloor t/\Delta \rfloor)\wedge \overline{\lambda}_k^{K_3}}\big|^4\big)\Big) \\ \notag
           &\leq 8\Delta^4 \mathbb{E}\Big(\big(\big|\sum\limits_{i=0}^{\nu-1}|f(X_{(k+M+j\nu+i+1)\wedge\overline{\lambda}_k^{K_3}}, X_{(k+M+j\nu+i+1)\wedge \overline{\lambda}_k^{K_3}-M})|\big|^4\big)\Big) \\ \notag
           &\quad~+ 64\mathbb{E}\Big(\sup \limits_{t\in[t_{k+M}^{j^*,j},t_{k+M}^{j^*,j+1}]}\big(\big|\sum\limits_{i=0}^{\lfloor(t-t_{k+M+j\nu})/\Delta)\rfloor-1}g(X_{(k+M+j\nu+i)\wedge\overline{\lambda}_k^{K_3}},X_{(k+M+j\nu+i)\wedge \overline{\lambda}_k^{K_3}-M})\\ \notag
           &\quad~\times \Delta W_{(k+M+j\nu+i)\wedge\overline{\lambda}_k^{K_3}}\big|^4\big)\Big) \\
         &\quad~+64\mathbb{E}\Big(\sup \limits_{t\in[t_{k+M}^{j^*,j},t_{k+M}^{j^*,j+1}]}\big(\big|\frac{t-\lfloor t/\Delta\rfloor}{\Delta}g(X_{(\lfloor t/\Delta \rfloor)\wedge \overline{\lambda}_k^{K_3} }, X_{(\lfloor t/\Delta \rfloor)\wedge \overline{\lambda}_k^{K_3} -M})\Delta W_{(\lfloor t/\Delta \rfloor)\wedge \overline{\lambda}_k^{K_3}}\big|^4\big)\Big).
     \end{align}
      By Assumption \ref{as1}, there exists a constant $c_{K_3}>0$ such that
    \begin{align}\label{eq-4.46}
        |f(x,y)| \vee |g(x,y)| \leq c_{K_3}
    \end{align}
    for all $|x| \vee |y| \leq K_3$.
    Thus, it follows from \eqref{eq-4.45} and \eqref{eq-4.46} that
    \begin{align}\label{eq-4.47}
          \notag \mathbb{E}&\Big(\sup \limits_{t\in[t_{k+M}^{j^*,j},t_{k+M}^{j^*,j+1}]}\big(|X(t \wedge t_{\overline{\lambda}_k^{K_3}})-X(t_{k+M}^{j^*,j} \wedge t_{ \overline{\lambda}_k^{K_3}})|^4\big)\Big) \\ \notag
         &\leq 8c_{K_3}^4\nu^4 \Delta^4+64c_{K_3}^4\mathbb{E}\Big(\sup \limits_{0\leq j\leq \nu-1}\big(\big|\sum\limits_{i=0}^{j}\Delta W_{(k+M+j\nu+i)\wedge\overline{\lambda}_k^{K_3}}\big|^4\big)\Big) +192c_{K_3}^4\Delta^2 \\ \notag
         &\leq 8c_{K_3}^4\nu^4 \Delta^4+192c_{K_3}^4\Delta^3 + 16384c_{K_3}^4\mathbb{E}\Big(\sum \limits_{i=1}^{\nu-1}|\Delta W_{(k+M+j\nu+i)\wedge\overline{\lambda}_k^{K_3}}|^2\Big)^2\\ \notag
         &\leq 8c_{K_3}^4\nu^4 \Delta^4+192c_{K_3}^4\Delta^3 + 16384c_{K_3}^4 \nu\mathbb{E}\Big(\sum \limits_{i=1}^{\nu-1}|\Delta W_{(k+M+j\nu+i)\wedge\overline{\lambda}_k^{K_3}}|^4\Big)\\ \notag&\leq 8c_{K_3}^4\nu^4 \Delta^4+192c_{K_3}^4\Delta^3 + 16384c_{K_3}^4 \nu^2\Delta^2 \\
          &\leq  \frac{R_2}{(j^*)^2},
    \end{align}
    where $R_2:=8\tau^4 c_{K_3}^4+192c_{K_3}^4\tau^3+16384\tau^2c_{K_3}^4$. Then, for any $\epsilon _9>0$, choose $j^* \geq 1 \vee \big((2\cdot3^4R_2)/(\epsilon _8\epsilon _9^4)\big) $, and by the Chebyshev inequality, we can see that
    \begin{align*}
     \notag   \mathbb{P}&\big\{\overline{\lambda}_k^{K_3} \geq k+2M, \sup \limits_{\substack{|s_1-s_2|\leq \tau/j^*\\
s_1,s_2\in [t_k+\tau,t_k+2\tau]} } |X(s_1)-X(s_2)|\geq \epsilon _9\big\} \\ \notag
&\leq    \mathbb{P}\big\{\overline{\lambda}_k^{K_3} \geq k+2M, 3\max \limits_{0\leq j\leq j^*-1} \sup \limits_{t\in[t_{k+M}^{j^*,j},t_{k+M}^{j^*,j+1}]} |X(t)-X(t_{k+M}^{j^*,j+1})|\geq \epsilon _9\big\} \\ \notag
&\leq \sum \limits_{j=0}^{j^*-1}\mathbb{P}\big\{\overline{\lambda}_k^{K_3} \geq k+2M, \sup \limits_{t\in[t_{k+M}^{j^*,j},t_{k+M}^{j^*,j+1}]} |X(t)-X(t_{k+M}^{j^*,j+1})|\geq \frac{\epsilon _9}{3}\big\} \\ \notag
&\leq\frac{3^4}{\epsilon _9^4}  \sum \limits_{j=0}^{j^*-1}\mathbb{E}\Big(\sup \limits_{t\in[t_{k+M}^{j^*,j},t_{k+M}^{j^*,j+1}]}\big( |X(t)-X(t_{k+M}^{j^*,j+1})|^4\boldsymbol{1}_{\{\overline{\lambda}_k^{K_3} \geq k+2M\}}\big)\Big)\\
&\leq\frac{3^4}{\epsilon _9^4}  \sum \limits_{j=0}^{j^*-1}\mathbb{E}\Big(\sup \limits_{t\in[t_{k+M}^{j^*,j},t_{k+M}^{j^*,j+1}]}\big( |X(t\wedge t_{\overline{\lambda}_k^{K_3}} )-X(t_{k+M}^{j^*,j+1} \wedge t_{\overline{\lambda}_k^{K_3}} )|^4\big)\Big).
    \end{align*}
    This, together with \eqref{eq-4.47} implies that
    \begin{align}\label{eq-4.49}
         \mathbb{P}\big\{\overline{\lambda}_k^{K_3} \geq k+2M, \sup \limits_{\substack{|s_1-s_2|\leq \tau/j^*\\
s_1,s_2\in [t_k+\tau,t_k+2\tau]} } |X(s_1)-X(s_2)|\geq \epsilon _9\big\} \leq \frac{3^4R_2}{\epsilon _9^4j^*} \leq \frac{\epsilon _8}{2}.
    \end{align}
     Further, from \eqref{eq-4.44} and \eqref{eq-4.49}, we have
    \begin{align*}
         \mathbb{P}&\big\{  \sup \limits_{\substack{|s_1-s_2|\leq \tau/j^*\\
s_1,s_2\in [t_k+\tau,t_k+2\tau]} } |X(s_1)-X(s_2)|\geq \epsilon _9\big\} \\ \notag
&\leq \mathbb{P}\big\{\overline{\lambda}_k^{K_3}  < k+2M\big\}+\mathbb{P}\big\{\overline{\lambda}_k^{K_3}  \geq k+2M,  \sup \limits_{\substack{|s_1-s_2|\leq \tau/j^*\\
s_1,s_2\in [t_k+\tau,t_k+2\tau]} } |X(s_1)-X(s_2)|\geq \epsilon _9\big\} \\
&\leq \epsilon _8.
    \end{align*}
\end{proof}

We now prove the convergence of the numerical segment process to the underlying process in probability.
\begin{lemma}\label{LM47}
    Suppose Assumptions \ref{as1} and \ref{as2} hold. Then for any $\epsilon _{10}, \epsilon_{11} >0$ and $k\in \mathbb{N}$, there exists a $\Delta_4 \in (0,\Delta_3]$ such that for any $\Delta \in (0, \Delta_4]$ and $\xi \in B(R)$
    \begin{align*}
        \mathbb{P}\big\{\|X_{t_k}^{0,\xi}-x_{t_k}^{0,\xi}\|> \epsilon _{10}\big\} < \epsilon _{11}
    \end{align*}
\end{lemma}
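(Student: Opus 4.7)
The plan is to reduce the supremum-norm difference $\|X_{t_k}^{0,\xi} - x_{t_k}^{0,\xi}\|$, which involves a continuum of times in $[t_k - \tau, t_k]$, to a finite maximum over grid points plus two oscillation terms — one for each process. The three ingredients needed have already been assembled: Theorem \ref{T--2} controls the squared grid-point error uniformly in $k$, while Lemma \ref{LM511} and Lemma \ref{LM3.2} supply equicontinuity in probability for $X$ and $x$ respectively on windows of length $\tau$.

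First I will fix a positive integer $j^*$ and, shrinking $\Delta$ if necessary so that $M$ is an integer multiple of $j^*$, partition $[t_k - \tau, t_k]$ into $j^*$ subintervals whose endpoints $s_0 < s_1 < \cdots < s_{j^*}$ all lie on the discretization grid. A three-term triangle inequality then produces
\[
\|X_{t_k}^{0,\xi} - x_{t_k}^{0,\xi}\| \leq \omega_X + \omega_x + \max_{0 \leq j \leq j^*} |X(s_j) - x(s_j)|,
\]
with $\omega_X$ and $\omega_x$ denoting the oscillations of $X$ and $x$ over subintervals of length $\tau/j^*$ inside $[t_k - \tau, t_k]$.

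Second, I will invoke Lemma \ref{LM511} with tolerance pair $(\epsilon_{11}/3, \epsilon_{10}/3)$ — translated to the window $[t_k - \tau, t_k]$ by running the lemma from starting index $k - M$ — and Lemma \ref{LM3.2} with $(\epsilon_{10}/3, \epsilon_{11}/3)$, to force each of $\mathbb{P}\{\omega_X \geq \epsilon_{10}/3\}$ and $\mathbb{P}\{\omega_x \geq \epsilon_{10}/3\}$ below $\epsilon_{11}/3$. This simultaneously selects $j^*$ (possibly after enlarging it) and a maximal step size $\Delta_3' \in (0, \Delta_3]$. For the remaining grid-point term, Theorem \ref{T--2} gives $\mathbb{E}|X(s_j) - x(s_j)|^2 \leq C\Delta$ with $C$ independent of $k$, so Chebyshev and a union bound over the $j^* + 1$ grid points yield $\mathbb{P}\{\max_j |X(s_j) - x(s_j)| > \epsilon_{10}/3\} \leq 9(j^*+1)C\Delta/\epsilon_{10}^2$; choosing $\Delta_4 \in (0, \Delta_3']$ small enough drives this below $\epsilon_{11}/3$, and a final union bound over the three events delivers the stated bound $\epsilon_{11}$.

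The main obstacle is the joint bookkeeping in the second step: $j^*$ and $\Delta$ must be chosen together so that (a) the partition sits on grid points, i.e., $j^* \mid M = \tau/\Delta$, (b) the mesh $\tau/j^*$ falls below the $\delta$ produced by Lemma \ref{LM3.2}, and (c) the step size stays within the common validity range of Lemmas \ref{LM57}, \ref{LM510}, \ref{LM511}, and Theorem \ref{T--2}. Once these compatibility constraints are reconciled, no further probabilistic machinery is required and the three-way union bound closes the argument.
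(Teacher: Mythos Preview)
Your proposal is correct and follows essentially the same strategy as the paper: partition $[t_k-\tau,t_k]$ into $j^*$ equal subintervals, apply a three-term triangle inequality, control the oscillation of $X$ via Lemma~\ref{LM511}, the oscillation of $x$ via Lemma~\ref{LM3.2}, and the grid-point errors via Theorem~\ref{T--2} plus Chebyshev and a union bound over the $j^*{+}1$ nodes. Your bookkeeping of the compatibility constraints among $j^*$, $\delta$, and $\Delta$ is, if anything, more explicit than the paper's own presentation.
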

\begin{proof}
    First, it follows from the Chebyshev inequlity and Theorem \ref{T--2} that for any $\epsilon _{10} >0$
    \begin{align}\label{EQ-568}
 \notag \mathbb{P}&\big\{\big|x(t;0,\xi)-X(t;0,\xi)\big|\geq \frac{\epsilon _{10}}{3}\big\} \\
&\leq \frac{9}{\epsilon _{10}^2}\mathbb{E}\big(\big|x(t;0,\xi)-X(t;0,\xi)\big|^2\big) \leq \frac{9C\Delta}{\epsilon _{10}^2}.
    \end{align}
    Choose a $\nu$ sufficiently large such that
    \begin{align}\label{EQ-569}
        \frac{9C\Delta}{\epsilon _{10}^2} \leq \frac{\nu \Delta \epsilon _{11}}{6\tau}= \frac{\epsilon _{11}}{6j^*}.
    \end{align}
    According to Lemmas \ref{LM511}, \eqref{EQ-568} and \eqref{EQ-569}, we can see that for any $\Delta \in (0,\Delta_4]$
    \begin{align*}
      \notag    \mathbb{P}\big\{\|X_{t_k}^{0,\xi}&-x_{t_k}^{0,\xi}\|> \epsilon _{10}\big\}  \\ \notag
      &=\mathbb{P}\big\{\sup \limits_{0\leq j \leq j^*-1} \sup\limits_{t\in[t_{k-M}^{j^*,j},t_{k-M}^{j^*,j+1}]} \big|X(t;0,\xi)-x(t;0,\xi)\big| \geq \epsilon _{10}\big\}\\ \notag
      &\leq \mathbb{P}\big\{\sup \limits_{0\leq j \leq j^*-1} \sup\limits_{t\in[t_{k-M}^{j^*,j},t_{k-M}^{j^*,j+1}]} \big|X(t;0,\xi)-X(t_{k-M}^{j^*,j};0,\xi)\big| \geq \frac{\epsilon _{10}}{3}\big\}\\ \notag
      &\quad~+\mathbb{P}\big\{\sup \limits_{0\leq j \leq j^*-1}  \big|X(t_{k-M}^{j^*,j};0,\xi)-x(t_{k-M}^{j^*,j};0,\xi)\big| \geq \frac{\epsilon _{10}}{3}\big\} \\ \notag
      &\quad~+\mathbb{P}\big\{\sup \limits_{0\leq j \leq j^*-1} \sup\limits_{t\in[t_{k-M}^{j^*,j},t_{k-M}^{j^*,j+1}]} \big|x(t;0,\xi)-x(t_{k-M}^{j^*,j};0,\xi)\big| \geq \frac{\epsilon _{10}}{3}\big\}\\ \notag
      &\leq \mathbb{P}\big\{  \sup \limits_{\substack{|s_1-s_2|\leq \tau/j^*\\
s_1,s_2\in [t_k-\tau,t_k]} } |X(s_1)-X(s_2)|\geq \frac{\epsilon _{10}}{3}\big\} + j^*\sup \limits_{t \in [t_k-\tau,t_k]}\mathbb{P}\big\{\big|X(t;0,\xi-x(t;0,\xi))\big|\geq \frac{\epsilon _{10}}{3}\big\} \\ \notag
&\quad~+\mathbb{P}\big\{  \sup \limits_{\substack{|s_1-s_2|\leq \tau/j^*\\
s_1,s_2\in [t_k-\tau,t_k]} } |x(s_1)-x(s_2)|\geq \frac{\epsilon _{10}}{3}\big\} <\epsilon _{11}.
    \end{align*}
\end{proof}

We conclude this section by stating the final key result of this work.

\begin{theorem}\label{TM48}
    Under Assumptions \ref{as1}-\ref{as3}, for any $\xi \in B(R)$, the probability measure of $\{X_{t_k}^{0,\xi}\}_{k \geq 0}$ converges to the underlying invariant measure $\pi(\cdot)$ in the Bounded Lipschitz metric $d_{\mathbb{L}}$ as the step size tends to zero, that is
   \begin{align*}
        \lim \limits_{\substack{k\to \infty \\ \Delta \to 0}} d_{\mathbb{L}}(\overline{\mathbb{P}}_{t_k}(\xi, \cdot), \pi(\cdot))=0.
   \end{align*}
\end{theorem}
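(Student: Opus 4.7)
The plan is to combine the triangle inequality with the asymptotic stability in distribution of the underlying SDDE (Lemma \ref{LM3.3}) and the convergence in probability of the numerical segment process to the continuous one (Lemma \ref{LM47}). For any $F \in \mathbb{L}$,
\begin{align*}
\left|\mathbb{E}F(X_{t_k}^{0,\xi}) - \mathbb{E}_\pi F\right| \leq \left|\mathbb{E}F(X_{t_k}^{0,\xi}) - \mathbb{E}F(x_{t_k}^{0,\xi})\right| + \left|\mathbb{E}F(x_{t_k}^{0,\xi}) - \mathbb{E}_\pi F\right|,
\end{align*}
so, taking the supremum over $F \in \mathbb{L}$,
\begin{align*}
d_{\mathbb{L}}(\overline{\mathbb{P}}_{t_k}(\xi,\cdot),\pi(\cdot)) \leq \sup_{F\in\mathbb{L}}\mathbb{E}\left|F(X_{t_k}^{0,\xi}) - F(x_{t_k}^{0,\xi})\right| + d_{\mathbb{L}}(\mathbb{P}_{t_k}(\xi,\cdot),\pi(\cdot)).
\end{align*}
It therefore suffices to bound each term separately, which I would carry out in two steps.

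First, for any $\varepsilon>0$, Lemma \ref{LM3.3} guarantees the existence of an integer $K$ such that $d_{\mathbb{L}}(\mathbb{P}_{t_k}(\xi,\cdot),\pi(\cdot)) < \varepsilon/2$ for all $k \geq K$, disposing of the second term. Second, for the first term I would exploit that every $F \in \mathbb{L}$ is both $1$-Lipschitz and uniformly bounded by $1$, which yields $|F(x)-F(y)| \leq \min(|x-y|,2)$. Splitting the expectation according to whether $\|X_{t_k}^{0,\xi}-x_{t_k}^{0,\xi}\|$ exceeds an auxiliary level $\varepsilon_{10}>0$, I obtain
\begin{align*}
\mathbb{E}\left|F(X_{t_k}^{0,\xi}) - F(x_{t_k}^{0,\xi})\right| \leq \varepsilon_{10} + 2\,\mathbb{P}\left\{\|X_{t_k}^{0,\xi}-x_{t_k}^{0,\xi}\| > \varepsilon_{10}\right\}.
\end{align*}
Choosing $\varepsilon_{10} = \varepsilon/4$ and applying Lemma \ref{LM47} with $\varepsilon_{11} = \varepsilon/8$, I secure a $\Delta_4>0$ such that for every $\Delta \in (0,\Delta_4]$ and \emph{every} $k \in \mathbb{N}$, the first term is at most $\varepsilon/2$. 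Summing the two estimates yields $d_{\mathbb{L}}(\overline{\mathbb{P}}_{t_k}(\xi,\cdot),\pi(\cdot)) \leq \varepsilon$ whenever $k \geq K$ and $\Delta \in (0,\Delta_4]$, which is the desired joint limit.

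The main subtlety is that the bound on the first term must be \emph{uniform in $k$}; otherwise one cannot take the joint limit as $k \to \infty$ and $\Delta \to 0$. This uniformity, however, is precisely what Lemma \ref{LM47} supplies, which in turn rests on the $t$-independent constant in Theorem \ref{T--2}. This is the one place where the infinite-horizon strong convergence rate established earlier in the paper — as opposed to a classical finite-time rate whose constant blows up in $T$ — is indispensable, since a $T$-dependent constant would force $\Delta_4$ to shrink with $k$ and decouple the two limiting parameters.
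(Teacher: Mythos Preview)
Your proof is correct and follows essentially the same route as the paper: triangle inequality through $\mathbb{P}_{t_k}(\xi,\cdot)$, Lemma~\ref{LM3.3} for the second term, and the splitting $|F(X)-F(x)|\le 2\wedge\|X-x\|$ combined with Lemma~\ref{LM47} (with the identical choices $\varepsilon_{10}=\varepsilon/4$, $\varepsilon_{11}=\varepsilon/8$) for the first. One small wording point: the threshold from Lemma~\ref{LM3.3} should be stated for $t_k\ge T$ rather than for an integer $k\ge K$, since $t_k=k\Delta$ depends on $\Delta$; the paper phrases it this way, and your conclusion ``$k\ge K$ and $\Delta\in(0,\Delta_4]$'' should likewise read ``$t_k\ge T$ and $\Delta\in(0,\Delta_4]$''.
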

\begin{proof}
    First, from Lemma \ref{LM3.3}, the process $\{x_t\}_{t \geq 0}$ has a unique invariant measure $\pi$. This means that for any $\epsilon >0$, there exists a $T>0$ such that for any $t_k \geq T$
    \begin{align*}
          d_{\mathbb{L}}(\mathbb{P}_{t_k}(\xi, \cdot), \pi(\cdot)) \leq \frac{\epsilon}{2}.
    \end{align*}
    By Lemma \ref{LM47}, there exists a $\Delta_4 >0$ such that for any $\Delta \in (0, \Delta_4]$
    \begin{align}
       \mathbb{P}\big\{\|X_{t_k}^{0,\xi}-x_{t_k}^{0,\xi}\|> \frac{\epsilon}{4}\big\} \leq \frac{\epsilon}{8}.
    \end{align}
    Next, from the definitions of ~$\mathbb{L}$ and $d_{\mathbb{L}}$, for any $F \in \mathbb{L}$, we have
    \begin{align*}
          d_{\mathbb{L}}(\mathbb{P}_{t_k}(\xi, \cdot), \overline{\mathbb{P}}_{t_k}(\xi, \cdot)) &= \sup \limits_{F \in \mathbb{L}}|\mathbb{E}(F(x_{t_k}^{0,\xi}))-\mathbb{E}(F(X_{t_k}^{0,\xi}))| \leq \mathbb{E}(2 \wedge \|x_{t_k}^{0,\xi}- X_{t_k}^{0, \xi}\|) \\ \notag
          &=\mathbb{E}\Big((2 \wedge \|x_{t_k}^{0,\xi}- X_{t_k}^{0, \xi}\|) \boldsymbol{1}_{\{\|x_{t_k}^{0,\xi}- X_{t_k}^{0, \xi}\| \geq \frac{\epsilon}{4}\}}\Big) \\ \notag
          &\quad~+\mathbb{E}\Big((2 \wedge \|x_{t_k}^{0,\xi}- X_{t_k}^{0, \xi}\|) \boldsymbol{1}_{\{\|x_{t_k}^{0,\xi}- X_{t_k}^{0, \xi}\| <\frac{\epsilon}{4}\}}\Big) \\ \notag
          &\leq 2 \mathbb{P}\big\{\|X_{t_k}^{0,\xi}-x_{t_k}^{0,\xi}\|> \frac{\epsilon}{4}\big\} + \frac{\epsilon}{4} < \frac{\epsilon}{2}.
    \end{align*}
   Finally, by the triangle inequality yields
    \begin{align*}
      d_{\mathbb{L}}(\mathbb{P}_{t_k}(\xi, \cdot), \pi(\cdot))    \leq d_{\mathbb{L}}(\mathbb{P}_{t_k}(\xi, \cdot), \overline{\mathbb{P}}_{t_k}(\xi, \cdot))+d_{\mathbb{L}}(\mathbb{P}_{t_k}(\xi, \cdot), \pi(\cdot)).
    \end{align*}
    The proof is complete.
\end{proof}

\section{Numerical examples}
\begin{figure}[htp!]
    \centering
    \includegraphics[width=0.49\textwidth]{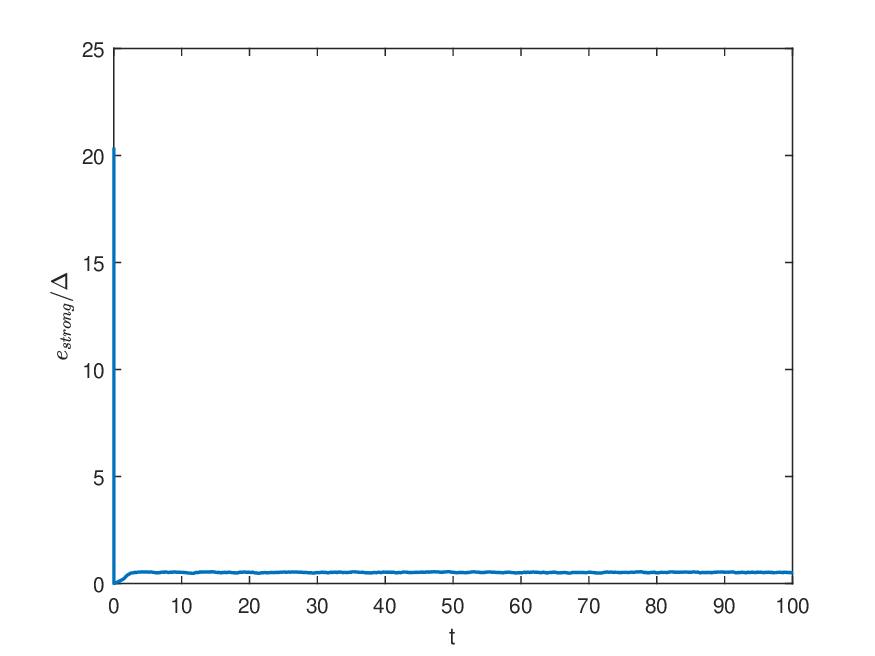}
 \includegraphics[width=0.49\textwidth]{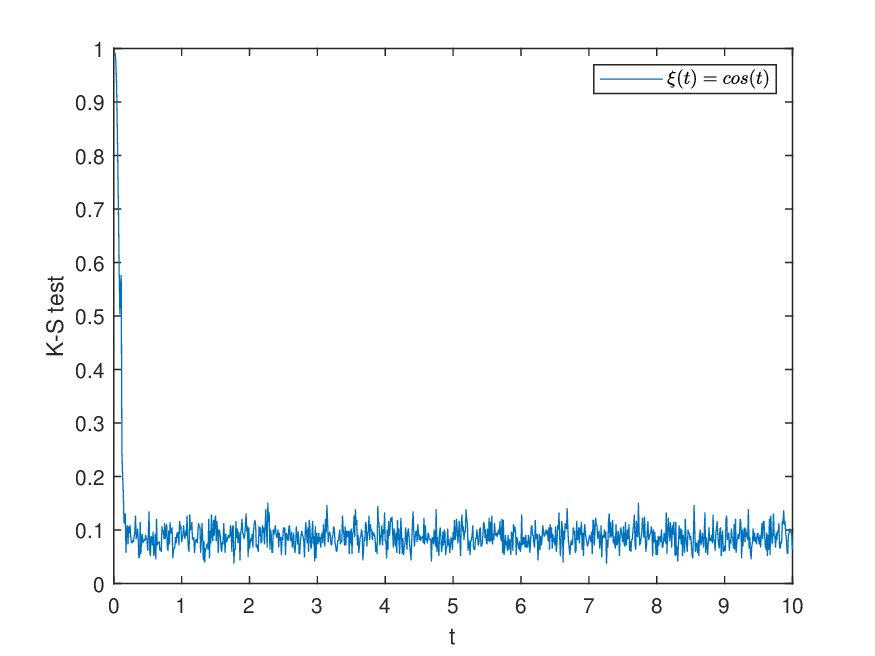}
 \caption{The asymptotic behaviour of the error constant $C$ and the K-S statistics for Example \eqref{EX1};}
    \label{estrong}
\end{figure}

\begin{figure}[htp!]
    \centering
    \includegraphics[width=0.49\textwidth]{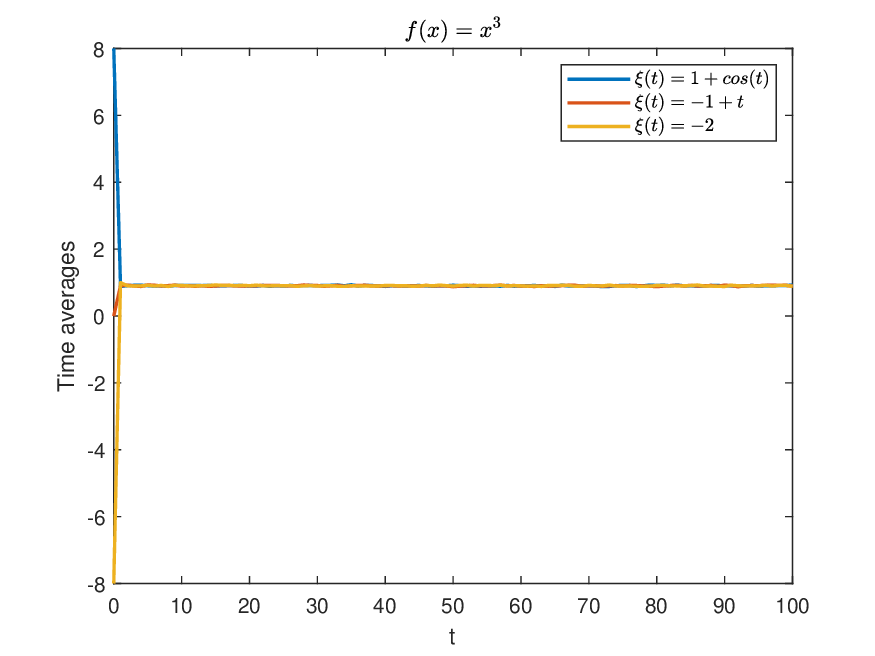}
 \includegraphics[width=0.49\textwidth]{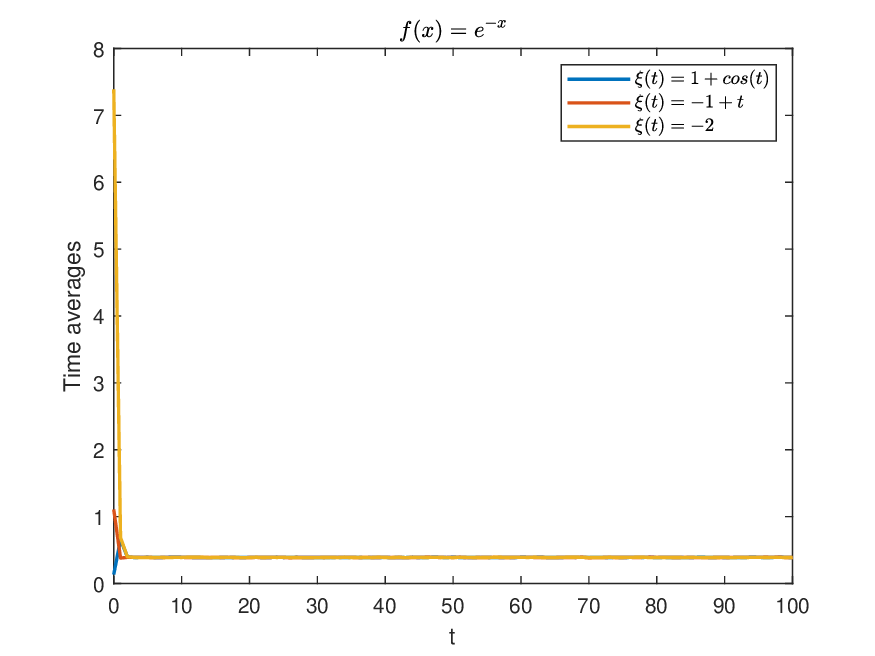}
  \caption{Sample means of $\mathbb{E}(|f(z(k))|)$ $(k\in \mathbb{N})$ with different initial data $\xi$}
    \label{ ergodicity}
\end{figure}
In this section, an example will be presented in order to verify the theories.
\begin{example}
    Consider the following SDDE
   \begin{align}\label{EX1}
    \begin{split}
  \left \{
 \begin{array}{ll}
dx(t)=\big(10-x(t)-10x^3(t)\big)dt + \big(x^2(t-1)\big)dW(t), \quad \quad t >0\\
 x(t)=cos(t), \quad \quad t\in [-1,0].
 \end{array}
 \right.
 \end{split}
 \end{align}
 It is easy to check that Assumptions \ref{as1}, \ref{as2} and \ref{as3} hold with $q=3, a_1=20, \epsilon_1=0,\epsilon_2=1, a_2=a_3=a_4=0,a_5=2,a_6=10,a_7=a_8=0,a_9=1,b_1=2,b_2=0,b_3=10,b_4=18^9/10, b_5=5,b_6=0,b_7=70,b_8=15,b_9=100, b_{10}=1,b_{11}=0,b_{12}=20,b_{13}=2$ and $K_1=1$.

 We approximate the expectation by averaging over 500 paths, using the numerical solution with stepsize $\Delta_1=0.0001$ as the underlying solution $x(t)$ and the stepsize $\Delta_2=0.01$ as the numerical solution $X(t)$. At each time point $t_k=k\Delta_2$, we also approximate $e_{strong}(t_k):=\mathbb{E}(|x(t_k)-X(t_k)|^2)$ across 500 paths. The left figure of Figure \ref{estrong} shows the change of the ratio of error and stepsize $e_{strong}/\Delta$ over the time interval $[0,100]$. It can be seen that the curve does not rise with the increase of time, but fluctuates within a certain range, which is consistent with the theoretical results.

 And by the definition of the segment process, we know that it's a functionally valued random variable. The distribution of the segment process is composed of the
distribution of an infinite and uncountable number of points. We cannot perform numerical experiments on it, and can only study the distribution of finitely countable points on each interval. Therefore, since $\tau=1$, on each interval $[k, k + 1]$, we only discuss the distribution at $t=k$. Similar to before, we take the probability distribution at $t=100$ of the numerical solution with stepsize $\Delta_1=0.0001$ as the invariant measure of the underlying solution. At each time point, the probability distribution is approximated by an empirical distribution of $100$ sample points. As shown in the right part of Figure \ref{estrong}, we use the Kolmogorov-Smirnov test (K-S test) to measure the difference between the empirical distribution and the invariant measure at each time point. The figure shows that the difference tends to be stable, indicating that there is an invariant measure of the underlying solution.

Finally, due to the close relationship between the invariant measure and ergodicity, we also verify the ergodicity of numerical solutions. As shown in Figure \ref{ ergodicity}, we select functions $f(x)=x^3$ and $f(x)=e^{-x}$, and start from three different initial values $1+cos(t)$, $-1+t$, and $-2$. The curves depicting the changes in their time averages $\mathbb{E}(f(z(t)))$ converge together, thus verifying the existence of ergodicity.

\end{example}

\bibliography{References}

\begin{thebibliography}{10}

\bibitem{ACO2023}
L.~Angeli, D.~Crisan, and M.~Ottobre.
\newblock Uniform in time convergence of numerical schemes for stochastic
  differential equations via strong exponential stability: Euler methods,
  split-step and tamed schemes.
\newblock {\em arXiv:2303.15463}, 2023.

\bibitem{AHMG2007}
M.~Arriojas, Y.~Hu, S.-E. Mohammed, and G.~Pap.
\newblock A delayed black and scholes formula.
\newblock {\em Stochastic Anal. Appl.}, 25(2):471 – 492, 2007.

\bibitem{BSY-2023}
J.~Bao, J.~Shao, and C.~Yuan.
\newblock Invariant probability measures for path-dependent random diffusions.
\newblock {\em Nonlinear Anal.}, 228, 2023.

\bibitem{Buckwar2000}
E.~Buckwar.
\newblock Introduction to the numerical analysis of stochastic delay
  differential equations.
\newblock {\em J. Comput. Appl. Math.}, 125(1-2):297 – 307, 2000.

\bibitem{CGMP2019}
F.~Cacace, A.~Germani, C.~Manes, and M.~Papi.
\newblock Predictor-based control of stochastic systems with nonlinear
  diffusions and input delay.
\newblock {\em Automatica}, 107:43 – 51, 2019.

\bibitem{CDO2021}
D.~Crisan, P.~Dobson, and M.~Ottobre.
\newblock Uniform in time estimates for the weak error of the {E}uler method
  for {SDE}s and a pathwise approach to derivative estimates for diffusion
  semigroups.
\newblock {\em Trans. Amer. Math. Soc.}, 374(5):3289--3330, 2021.

\bibitem{DGM2021}
J.~P. Décamps, F.~Gensbittel, and T.~Mariotti.
\newblock Investment timing and technological breakthroughs, 2021.

\bibitem{GA1997}
S.~Grenadier and A.~Weiss.
\newblock Investment in technological innovations: An option pricing approach.
\newblock {\em Journal of Financial Economics}, 44(3):397--416, 1997.

\bibitem{GMY2018}
Q.~Guo, X.~Mao, and R.~Yue.
\newblock The truncated euler–maruyama method for stochastic differential
  delay equations.
\newblock {\em Numer. Algorithms.}, 78(2):599 – 624, 2018.

\bibitem{HMS2003}
D.~J. Higham, X.~Mao, and A.~M. Stuart.
\newblock Exponential mean-square stability of numerical solutions to
  stochastic differential equations.
\newblock {\em LMS J. Comput. Math.}, 6:297 – 313, 2003.

\bibitem{IW1989}
N.~Ikeda and S.~Watanabe.
\newblock {\em Stochastic differential equations and diffusion processes}.
\newblock North-Holland, Amsterdam, 1989.

\bibitem{JY2017}
Y.~Ji and C.~Yuan.
\newblock Tamed em scheme of neutral stochastic differential delay equations.
\newblock {\em J. Comput. Appl. Math.}, 326:337 – 357, 2017.

\bibitem{LM2006}
J.~Lei and M.~C. Mackey.
\newblock Stochastic differential delay equation, moment stability, and
  application to hematopoietic stem cell regulation system.
\newblock {\em SIAM J. Appl. Math.}, 67(2):387 – 407, 2006.

\bibitem{LMS2023}
X.~Li, X.~Mao, and G.~Song.
\newblock Explicit approximation of invariant measure for stochastic delay
  differential equations with the nonlinear diffusion term.
\newblock {\em J. Theoret. Probab.}, 2023.

\bibitem{LCF2004}
M.~Liu, W.~Cao, and Z.~Fan.
\newblock Convergence and stability of the semi-implicit euler method for a
  linear stochastic differential delay equation.
\newblock {\em J. Comput. Appl. Math.}, 171(1-2):255 – 268, 2004.

\bibitem{Mao2007}
X.~Mao.
\newblock Exponential stability of equidistant euler-maruyama approximations of
  stochastic differential delay equations.
\newblock {\em J. Comput. Appl. Math.}, 200(1):297 – 316, 2007.

\bibitem{M2007}
X.~Mao.
\newblock {\em Stochastic Differential Equations and Applications}.
\newblock Horwood, Chichester, UK, 2 edition, 2007.

\bibitem{Mao2015}
X.~Mao.
\newblock Almost sure exponential stability in the numerical simulation of
  stochastic different equations.
\newblock {\em SIAM J. Numer. Anal.}, 53(1):370 – 389, 2015.

\bibitem{MR2005}
X.~Mao and M.~J. Rassias.
\newblock Khasminskii-type theorems for stochastic differential delay
  equations.
\newblock {\em Stochastic Anal. Appl.}, 23(5):1045 – 1069, 2005.

\bibitem{MS2003}
X.~Mao and S.~Sabanis.
\newblock Numerical solutions of stochastic differential delay equations under
  local lipschitz condition.
\newblock {\em J. Comput. Appl. Math.}, 151(1):215 – 227, 2003.

\bibitem{MY2006}
X.~Mao and C.~Yuan.
\newblock {\em Stochastic Differential Equations with Markovian Switching}.
\newblock Imperial College Press, London, 2 edition, 2006.

\bibitem{M1974}
E.~J. McShane.
\newblock {\em Stochastic Calculus and Stochastic Models. Academic}.
\newblock Academic Press, 1974.

\bibitem{M1986}
S.E.A. Mohammed.
\newblock {\em Stochastic Functional Differential Equations}.
\newblock Longman, New York, 1986.

\bibitem{RRV2006}
M.~Reiß, M.~Riedle, and O.~van Gaans.
\newblock Delay differential equations driven by l\'{e}vy processes:
  stationarity and feller properties.
\newblock {\em Stochastic Process. Appl.}, 116(10):1409 --1432, 2006.

\bibitem{SWMF2025}
B.~Shi, Y.~Wang, X.~Mao, and F.~Wu.
\newblock Approximation of invariant measures of a class of backward
  euler-maruyama scheme for stochastic functional differential equations.
\newblock {\em J. Differential Equations}, 389:415 – 456, 2024.

\bibitem{SHGL2022}
G.~Song, J.~Hu, S.~Gao, and X.~Li.
\newblock The strong convergence and stability of explicit approximations for
  nonlinear stochastic delay differential equations.
\newblock {\em Numer. Algorithms}, 89(2):855 – 883, 2022.

\bibitem{WWM2019}
Y.~Wang, F.~Wu, and X.~Mao.
\newblock Stability in distribution of stochastic functional differential
  equations.
\newblock {\em Systems Control Lett.}, 132, 2019.

\bibitem{Zhou2015}
S.~Zhou.
\newblock Strong convergence and stability of backward euler–maruyama scheme
  for highly nonlinear hybrid stochastic differential delay equation.
\newblock {\em Calcolo}, 52(4):445 – 473, 2015.

\end{thebibliography}
  \end{document}